\providecommand{\texorpdfstring}[2]{#1}
\providecommand{\url}[1]{#1}
\renewcommand{\le}{\leqslant}
\renewcommand{\ge}{\geqslant}
\renewcommand{\P}{\operatorname{\mathsf{P}}} 
\newcommand{\E}{\operatorname{\mathsf{E}}}
\newcommand{\ii}[1]{\operatorname{\mathsf{I}}\{#1\}}
\newcommand{\R}{\mathbb{R}}
\newcommand{\al}{\alpha}
\newcommand{\be}{\beta}
\newcommand{\ga}{\gamma}
\newcommand{\de}{\delta}
\renewcommand{\th}{\theta}
\newcommand{\la}{\lambda}
\newcommand{\Si}{\Sigma}
\newcommand{\tf}{\tilde 
w}
\newcommand{\ttt}{\tilde t}
\newcommand{\tuu}{\tilde\uu}
\newcommand{\tx}{\tilde\xx}
\newcommand{\tX}{\tilde\X}
\newcommand{\vpi}{\varphi}
\newcommand{\e}{{\mathbf{e}}}
\newcommand{\uu}{{\mathbf{u}}}
\newcommand{\vv}{{\mathbf{v}}}
\newcommand{\x}{{\mathbf{x}}}
\newcommand{\y}{{\mathbf{y}}}
\newcommand{\z}{{\mathbf{z}}}
\newcommand{\ZZ}{{\mathbf{Z}}}
\newcommand{\xx}{\mathbf{x}}
\newcommand{\X}{\mathbf{X}}
\newcommand{\Y}{\mathbf{Y}}
\newcommand{\muu}{{\boldsymbol{\mu}}}
\newcommand{\0}{\mathbf{0}}
\newcommand{\ip}[2]{\langle #1,#2\rangle}
\newcommand{\w}{w}
\newtheorem*{theorem*}{Theorem~\ref{th:}'}
\newtheorem*{thm*}{Theorem~\ref{th:der}'}
\newtheorem*{thA}{Theorem~A}
\newtheorem*{thB}{Theorem~B}
\newtheorem{theorem}{Theorem}
\newtheorem{corollary}[theorem]{Corollary}
\newtheorem*{corollary*}{Corollary~\ref{cor:A}'}
\newtheorem*{cor*}{Corollary~\ref{cor:Ader}'}
\newtheorem{lemma}[theorem]{Lemma}
\newtheorem{proposition}[theorem]{Proposition}
\theoremstyle{remark}
\newtheorem{remark}[theorem]{Remark}
\numberwithin{equation}{section}
\numberwithin{theorem}{section}
\begin{document}

\title{Exact lower and upper bounds for shifts of Gaussian measures}

\author{Iosif Pinelis}
\address{Department of Mathematical Sciences\\Michigan Technological University\\Hough\-ton, Michigan 49931}
\email{ipinelis@mtu.edu}

\subjclass[2010]{
60E15, 62E17, 62H10, 62H15, 26B25, 26D10, 26D15, 28C20}

\keywords{Gaussian measures, multivariate normal, shifts, unimodality, logconcavity, monotonicity, exact bounds, tests for the mean}

\date{\today}

\begin{abstract}
Exact upper and lower bounds on the ratio  
$\E\w(\X-\vv)/\E\w(\X)$ for a centered Gaussian random vector $\X$ in $\R^n$,  
as well as bounds on the rate of change of  
$\E\w(\X-t\vv)$ in $t$, where $\w\colon\R^n\to[0,\infty)$ is any even unimodal function and $\vv$ is any vector in $\R^n$.  
As a corollary of such results, exact upper and lower bounds on the power function of statistical tests for the mean of a multivariate normal distribution are given. 
\end{abstract}

\maketitle



\section{Introduction}
\label{intro}


%
A classic 
result due to Anderson \cite[Theorem~1]
{anderson55} states the following: If \break 
$\w\colon\R^n\to[0,\infty)$ is an even unimodal function and if 
$A$ is a symmetric convex subset of $\R^n$,
then 
$\int_A \w(\xx+t\vv)\,d\x$ is nondecreasing in $t\ge0$, for any vector $\vv=(v_1,\dots,v_n)\in\R^n$. 

By a clever application of Anderson's theorem, Marshall and Olkin~\cite{marsh-olkin74} showed that, if a random vector $\X$ in $\R^n$ has a Schur-concave density and if the indicator of a subset $A$ of $\R^n$ is Schur concave and permutation symmetric, then \break 
$\P(\X\in\vv+A)$ is Schur concave in $\vv$.  

By using a rather different method, it was shown in \cite{schur2_published} that for a standard 
Gaussian random vector $\ZZ$ in $\R^n$ the probability $\P(\ZZ\in \vv+A)$ is Schur concave/Schur convex in $(v_1^2,\dots,v_n^2)$ provided that the indicator of the set $A$ is so, respectively. An application of this result, also given in \cite{schur2_published}, was that, for large $n$, tests whose rejection regions are balls, centered at the origin, with respect to the $\ell_p$-norm on $\R^n$ with $p>2$ will be generally preferable to the likelihood ratio test in terms of the asymptotic relative efficiency.   

Here we obtain a number of results somewhat related to the just mentioned ones, including exact upper and lower bounds on the ratio  
$\E\w(\X-\vv)/\E\w(\X)$ for a centered Gaussian random vector $\X$ in $\R^n$, 
as well as bounds on the rate of change of  
$\E\w(\X-t\vv)$ in $t$; here, again, $\w\colon\R^n\to[0,\infty)$ is any even unimodal function. 
As a corollary of such results, we give exact upper and lower bounds on the power function of statistical tests for the mean of a multivariate normal distribution. 

The proof of cited Theorem~1 in \cite
{anderson55} was based on the Brunn--Minkowski inequality. However (as can be seen from Remark~\ref{rem:mix} in the present paper), 
this theorem 
can be immediately reduced to the case when the function $\w$ is log concave, and then \cite[Theorem~1]{anderson55} can be obtained at once from the following version of the Pr\'ekopa--Leindler theorem -- cf.\ \cite[Corollary~3.5]{brasc-lieb76}: 

\begin{thA}
If a function $F\colon\R^m\times\R^n\to[0,\infty]$ is log concave, then the function $G\colon\R^m\to[0,\infty]$ given by the formula 
\begin{equation*}
	G(\x):=\int_{\R^n}F(\x,\y)\,d\y
\end{equation*}
for $\x\in\R^n$ is also log concave. 
\end{thA}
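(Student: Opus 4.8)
The plan is to derive Theorem~A from the classical one-variable Pr\'ekopa--Leindler inequality, which I would prove by a monotone-rearrangement (quantile) transport argument, and then pass to general $n$ by an induction that integrates out the coordinates of $\y$ one at a time. The one-variable inequality reads: \emph{if $\la\in(0,1)$ and $f,g,h\colon\R\to[0,\infty]$ are measurable and satisfy $h\bigl((1-\la)s+\la t\bigr)\ge f(s)^{1-\la}\,g(t)^{\la}$ for all $s,t\in\R$, then $\int_{\R}h\ge\bigl(\int_{\R}f\bigr)^{1-\la}\bigl(\int_{\R}g\bigr)^{\la}$} (with the conventions $0\cdot\infty=0$, $\infty^{1-\la}=\infty$). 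This one-dimensional statement carries the whole substance of the matter; given it, Theorem~A follows almost formally.

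For the one-variable inequality I would argue as follows. If $\int_{\R}f=0$ or $\int_{\R}g=0$ the right side is $0$; if $\int_{\R}f=\infty$ and $\int_{\R}g>0$ (and symmetrically), replace $f,g$ by the truncations $(f\wedge K)\mathbf{1}_{[-K,K]}$ and $(g\wedge K)\mathbf{1}_{[-K,K]}$ --- the hypothesis is inherited since $r\mapsto r^{1-\la}$ and $r\mapsto r^{\la}$ are nondecreasing, the truncated integrals are finite and, for large $K$, positive, and $K\to\infty$ in the bounded case below forces $\int_{\R}h=\infty$. So assume $0<\int_{\R}f,\int_{\R}g<\infty$; rescaling $f,g,h$ by positive constants, which preserves the hypothesis, I may take $\int_{\R}f=\int_{\R}g=1$, and it remains to show $\int_{\R}h\ge1$. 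Let $\Phi(x):=\int_{-\infty}^{x}f$ and $\Psi(y):=\int_{-\infty}^{y}g$ be the (continuous, nondecreasing) distribution functions, and let $u,v\colon(0,1)\to\R$ be the quantile functions $u(\tau):=\inf\{x:\Phi(x)\ge\tau\}$, $v(\tau):=\inf\{y:\Psi(y)\ge\tau\}$. These are nondecreasing, hence a.e.\ differentiable; continuity of $\Phi,\Psi$ gives $\Phi\bigl(u(\tau)\bigr)=\tau=\Psi\bigl(v(\tau)\bigr)$ for all $\tau$, and differentiating, with $\Phi'=f$ and $\Psi'=g$ a.e., yields $f\bigl(u(\tau)\bigr)\,u'(\tau)=1=g\bigl(v(\tau)\bigr)\,v'(\tau)$ for a.e.\ $\tau\in(0,1)$. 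Set $w(\tau):=(1-\la)u(\tau)+\la v(\tau)$, again nondecreasing. By the weighted arithmetic--geometric mean inequality $w'(\tau)\ge u'(\tau)^{1-\la}\,v'(\tau)^{\la}$ a.e., while the hypothesis gives $h\bigl(w(\tau)\bigr)\ge f\bigl(u(\tau)\bigr)^{1-\la}\,g\bigl(v(\tau)\bigr)^{\la}=u'(\tau)^{-(1-\la)}\,v'(\tau)^{-\la}$ a.e.; multiplying, $h\bigl(w(\tau)\bigr)\,w'(\tau)\ge1$ for a.e.\ $\tau\in(0,1)$. Since $w$ is nondecreasing, the change-of-variables inequality for monotone maps gives $\int_{\R}h\ge\int_{0}^{1}h\bigl(w(\tau)\bigr)\,w'(\tau)\,d\tau\ge\int_{0}^{1}1\,d\tau=1$.

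I would then deduce Theorem~A. For $n=1$, fix $\x_0,\x_1\in\R^m$ and $\la\in(0,1)$ and set $\x_\la:=(1-\la)\x_0+\la\x_1$; the slices $f:=F(\x_0,\cdot)$, $g:=F(\x_1,\cdot)$, $h:=F(\x_\la,\cdot)$ are log concave on $\R$, hence measurable, and log concavity of $F$ gives, for all $s,t\in\R$, $h\bigl((1-\la)s+\la t\bigr)=F\bigl((1-\la)(\x_0,s)+\la(\x_1,t)\bigr)\ge F(\x_0,s)^{1-\la}F(\x_1,t)^{\la}=f(s)^{1-\la}g(t)^{\la}$, so by the one-variable inequality $G(\x_\la)=\int_{\R}h\ge\bigl(\int_{\R}f\bigr)^{1-\la}\bigl(\int_{\R}g\bigr)^{\la}=G(\x_0)^{1-\la}G(\x_1)^{\la}$; since $\x_0,\x_1,\la$ are arbitrary this says exactly that $G$ is log concave on $\R^m$. (That $G$ is a well-defined measurable $[0,\infty]$-valued function follows from Tonelli's theorem once one notes that a log-concave function is measurable, being continuous on the interior of the convex set on which it is positive, the remainder of that set being Lebesgue-null.) For $n\ge2$ one iterates: writing $\y=(\y',y_n)$ with $\y'\in\R^{n-1}$ and viewing $F$ as a log-concave function on $\R^{m+n-1}\times\R$ (identifying $\R^m\times\R^{n-1}$ with $\R^{m+n-1}$), the case $n=1$ shows $(\x,\y')\mapsto\int_{\R}F(\x,\y',y_n)\,dy_n$ is log concave on $\R^{m}\times\R^{n-1}$; applying the statement with $n-1$ in place of $n$ and then Tonelli's theorem gives that $\x\mapsto\int_{\R^{n-1}}\int_{\R}F(\x,\y',y_n)\,dy_n\,d\y'=\int_{\R^{n}}F(\x,\y)\,d\y=G(\x)$ is log concave, completing the induction on $n$.

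The overall architecture is routine; the one place I expect to call for real care is the measure-theoretic core of the one-variable inequality --- a.e.\ differentiability of monotone functions, the validity of the identity $f(u(\tau))\,u'(\tau)=1$ off a $\tau$-null set (for which one checks that the exceptional set of $x$ is $f$-null and pulls it back through $u$), and the change-of-variables inequality $\int_{0}^{1}\phi(w(\tau))\,w'(\tau)\,d\tau\le\int_{\R}\phi$ for monotone $w$ and $\phi\ge0$, where it is precisely the inequality (rather than equality) that absorbs any jump or singular part of $w$. All of this is classical --- the one-variable inequality and its consequence Theorem~A go back to Pr\'ekopa and Leindler, and the form stated here is \cite[Corollary~3.5]{brasc-lieb76} --- so in the paper it would in fact suffice to cite it.
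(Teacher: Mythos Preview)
Your argument is correct, and you already anticipate the paper's treatment in your final sentence: the paper does not prove Theorem~A at all but simply quotes it as \cite[Corollary~3.5]{brasc-lieb76}, adding only the remark that the Pr\'ekopa--Leindler theorem in turn follows from the one-dimensional Brunn--Minkowski inequality. Your self-contained derivation --- the one-dimensional Pr\'ekopa--Leindler inequality via the quantile-transport argument, followed by induction on $n$ integrating out one coordinate at a time --- is a standard and valid route to the result, and the technical caveats you flag (a.e.\ chain rule for monotone functions, the change-of-variables \emph{inequality} absorbing the singular part of $w$) are exactly the right ones. So what you have written goes well beyond what the paper itself supplies; in the paper Theorem~A is purely an imported tool, and your proof, while sound, is supplementary rather than a reconstruction of anything the authors did.
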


In turn, as shown in \cite{brasc-lieb76}, the Pr\'ekopa--Leindler theorem follows already from the simple ``one-dimensional'' case of the Brunn--Minkowski inequality, for subsets of $\R$; concerning this ``one-dimensional'' case, see e.g.\ \cite[Theorem 2.1]{gardner02}. It is also shown in \cite[Corollary~3.4]{brasc-lieb76} that, vice versa, the Brunn--Minkowski inequality follows from a generalized version of the Pr\'ekopa--Leindler theorem.  

Theorem~A will be the main tool in the proof of the mentioned exact upper and lower bounds on the ratio  
$\E\w(\X-\vv)/\E\w(\X)$. 
Another ingredient, which significantly simplifies the proof, is a so-called special-case l'Hospital-type rule for monotonicity (cf.\ e.g.\ \cite[Proposition 4.1]{pin06}): 

\begin{thB}
Let $-\infty\le a<b\le\infty$. Let $f$ and $g$ be differentiable functions defined on the interval $(a, b)$ such that $g$ and $g'$ do not take on the zero value
and do not change their respective signs on $(a, b)$. Suppose also that $f(a+)=g(a+)=0$ or $f(b-)=g(b-)=0$. 
Under these conditions, if the ``derivative'' ratio $f'/g'$ is increasing on $(a, b)$, 
then the ratio $f/g$ is so as well.  
\end{thB}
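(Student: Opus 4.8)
The plan is to reduce the statement to a single normalized situation and then, for each fixed $x_0\in(a,b)$, to read off the sign of $(f/g)'(x_0)$ from an elementary one-variable monotonicity argument. Note first that $f/g$ is differentiable on $(a,b)$, since $g$ is non-vanishing there, with $(f/g)'=(f'g-fg')/g^2$; the whole point is to show that the numerator $f'g-fg'$ keeps a constant sign.

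To normalize, observe that replacing $(f,g)$ by $(-f,-g)$ changes neither $f/g$, nor $f'/g'$, nor the hypotheses, so I may assume $g>0$ on $(a,b)$. If the distinguished zero is at $a$, i.e.\ $f(a+)=g(a+)=0$, then $g'>0$ on $(a,b)$: otherwise $g$ would be decreasing with $g(a+)=0$, forcing $g\le0$, a contradiction. Symmetrically, if $f(b-)=g(b-)=0$, then $g'<0$. Moreover this second case is carried to the first by the substitution $(f(x),g(x))\mapsto(-f(-x),g(-x))$ on $(-b,-a)$, which turns the distinguished zero into a left-endpoint zero and preserves both ``$f'/g'$ increasing'' and ``$f/g$ increasing''. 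Hence it suffices to treat the case $f(a+)=g(a+)=0$ with $g>0$ and $g'>0$ on $(a,b)$.

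Now fix $x_0\in(a,b)$, set $\rho_0:=f'(x_0)/g'(x_0)$, and consider the auxiliary function $u(x):=f(x)-\rho_0\,g(x)$ on $(a,x_0]$. It is differentiable, $u(a+)=0$, and
\[
u'(x)=g'(x)\Bigl(\tfrac{f'(x)}{g'(x)}-\rho_0\Bigr)\le0\qquad\text{for }a<x<x_0,
\]
since $g'>0$ and, by monotonicity of $f'/g'$, one has $f'(x)/g'(x)\le\rho_0$ for $x<x_0$. Thus $u$ is nonincreasing on $(a,x_0]$, so $u(x_0)\le u(a+)=0$, i.e.\ $f(x_0)\le\rho_0\,g(x_0)$, and therefore
\[
(f/g)'(x_0)=\frac{f'(x_0)g(x_0)-f(x_0)g'(x_0)}{g(x_0)^2}=\frac{g'(x_0)\bigl(\rho_0\,g(x_0)-f(x_0)\bigr)}{g(x_0)^2}\ge0,
\]
with strict inequality whenever $f'/g'$ is strictly increasing (then $u'<0$ on $(a,x_0)$, whence $u(x_0)<0$). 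Since $x_0$ was arbitrary, $f/g$ is increasing on $(a,b)$.

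I do not expect a real obstacle here: the argument is short once the reductions are made. The one point needing care is that $f'/g'$ is assumed only monotone, not differentiable --- which is exactly why one cannot differentiate $f'g-fg'$ directly and must instead freeze $x_0$ and compare $f'/g'$ at a running point with its value $\rho_0$ at $x_0$. The attendant bookkeeping (which sign $g$ and $g'$ carry, depending on the location of the zero, and the passage to the limit $u(a+)=0$ when $a=-\infty$) is what the normalization step is designed to keep harmless.
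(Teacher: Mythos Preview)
Your argument is correct. The normalization to $g>0$, $g'>0$, and a left-endpoint zero is clean, and the core step---freezing $x_0$, forming $u=f-\rho_0 g$, and reading off $u(x_0)\le u(a{+})=0$ from $u'\le 0$---is exactly the right way to avoid differentiating $f'/g'$. For the strict version you should perhaps spell out that one first picks an intermediate point $x_1\in(a,x_0)$ to get $u(x_0)<u(x_1)\le 0$, since the bare limit $x\to a{+}$ only yields $\le$; but you essentially say this.

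As for comparison with the paper: there is nothing to compare. The paper does not prove Theorem~B; it is quoted in the introduction as a known ``special-case l'Hospital-type rule for monotonicity'' and attributed to \cite[Proposition~4.1]{pin06}. Your self-contained proof is in the spirit of the standard arguments for such rules (and indeed close to how they are proved in the cited reference), so it fits naturally here even though the author chose to cite rather than reprove.
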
 

General versions of this l'Hospital-type rule for monotonicity, without the assumption that $f(a+)=g(a+)=0$ or $f(b-)=g(b-)=0$ are also known; see again \cite{pin06} and references therein. 

%
%
%
%
%
%

\medskip
\hrule
\medskip

Here are 
notations used in the rest of this paper: 
\begin{itemize}
	\item $\X$ is a zero-mean Gaussian random vector in $\R^n$ with a nonsingular covariance matrix $\Si$; 
	\item $\ZZ$ is a zero-mean Gaussian random vector in $\R^n$ with covariance matrix $I_n$;
	\item $\ga_n$ is the standard Gaussian measure over $\R^n$; 
	\item unless otherwise stated, $\w\colon\R^n\to[0,\infty)$ is any even 
unimodal 
function 
such that $\E\w(\ZZ)>0$ (and hence $\E\w(\X)>0$, in view of the absolute continuity of the distribution of $\ZZ$ with respect to that of $\X$); recall here that the unimodality of the function $\w$ means that the set $\{\x\in\R^n\colon \w(\x)>c\}$ is convex for each real $c$; 
	\item unless otherwise stated, $A$ is any 
	symmetric convex subset of $\R^n$ 
	such that $\P(\ZZ\in A)>0$ (and hence $\P(\X\in A)>0$); 
	\item $\ip\cdot\cdot$ denotes the standard inner product over $\R^n$, and $\|\cdot\|$ denotes the Euclidean norm on $\R^n$;
	\item $\de^*(\cdot|A)$ is the support function of a set $A\subseteq\R^n$, given by the formula 
\begin{equation}\label{eq:de*}
	\de^*(\vv|A):=\sup\{\ip\z\vv\colon\z\in A\}
\end{equation}
for $\vv\in\R^n$ (cf.\ e.g.\ \cite[page~28]{rocka}); 
\item $\Phi$ is the standard normal cumulative distribution function, and $\vpi=\Phi'$ is the standard normal density function; 
 \item for $t\in[0,\infty)$ and $a\in[0,\infty]$,  
\begin{equation}\label{eq:r}
	r_t(a):=
	\left\{
	\begin{alignedat}{2}
	&e^{-t^2/2}&&\text{ if }a=0, \\ 
	&\frac{\Phi(t+a)-\Phi(t-a)}{\Phi(a)-\Phi(-a)}&&\text{ if }a\in(0,\infty), \\ 
	&1&&\text{ if }a=\infty.   
	\end{alignedat}
	\right. 
\end{equation}
\item $\mathsf{I}_A$ denotes the indicator function of a set $A$, and $\ii{\mathcal A}$ denotes the indicator of an assertion $\mathcal A$; 
\item $\uu$ denotes an arbitrary unit vector in $\R^n$.
\end{itemize}
%

\section{
Statements of results
}
\label{summary}

\subsection{
Exact upper and lower bounds on the ratio \texorpdfstring{$\E \w(\X-t\uu)/\E \w(\X)$}{}
}
\label{bounds-ratio}

\begin{theorem}\label{th:}
For any 
real $t\ge0$ 
\begin{equation}\label{eq:}
	e^{-t^2\ip\uu{\Si^{-1}\uu}/2}=r_{t\|\Si^{-1/2}\uu\|}(0)  
	\le\frac{\E \w(\X-t\uu)}{\E \w(\X)}  
	\le r_{t\|\Si^{-1/2}\uu\|}(a_{\Si,\w,\uu})\le1,  
\end{equation}
where 
\begin{equation*}
	a_{\Si,\w,\uu}:=\frac{\de^*(\Si^{-1}\uu|A_\w)}{\|\Si^{-1/2}\uu\|} 
	\in[0,\infty], 
\end{equation*}
\begin{equation}\label{eq:A_f}
	A_\w:=
	\{\xx\in\R^n\colon \w(\xx)>0\}.  
\end{equation}
\end{theorem}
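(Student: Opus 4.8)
The plan is to reduce, first to the standard Gaussian measure, then (by a mixture representation) to the case $\w=\mathsf{I}_A$ with $A$ symmetric convex, and finally to a one--dimensional computation. The leftmost equality in~\eqref{eq:} is merely $\|\Si^{-1/2}\uu\|^2=\ip\uu{\Si^{-1}\uu}$ combined with the first line of~\eqref{eq:r}, and the rightmost bound $r_{t\|\Si^{-1/2}\uu\|}(a_{\Si,\w,\uu})\le1$ will follow from the fact, proved at the end, that $a\mapsto r_s(a)$ is nondecreasing with $r_s(\infty)=1$. For the common reduction, substitute $\X=\Si^{1/2}\ZZ$ and set $\tilde\w:=\w(\Si^{1/2}\,\cdot)$ -- still even and unimodal, with $\E\tilde\w(\ZZ)=\E\w(\X)>0$ -- and $\vv:=t\Si^{-1/2}\uu$; then $\E\w(\X-t\uu)=\E\tilde\w(\ZZ-\vv)$, $\E\w(\X)=\E\tilde\w(\ZZ)$, $\|\vv\|=t\|\Si^{-1/2}\uu\|$, $A_{\tilde\w}=\Si^{-1/2}A_\w$, and hence $\de^*\bigl(\vv/\|\vv\|\,\big|\,A_{\tilde\w}\bigr)=a_{\Si,\w,\uu}$ by positive homogeneity of the support function. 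So, renaming $\tilde\w$ as $\w$, it suffices to prove, for $\ZZ\sim\ga_n$, any even unimodal $\w\ge0$ with $\E\w(\ZZ)>0$, and any $\vv\ne\0$ (the case $\vv=\0$ being trivial), that $e^{-\|\vv\|^2/2}\le\E\w(\ZZ-\vv)/\E\w(\ZZ)\le r_{\|\vv\|}\bigl(\de^*(\vv/\|\vv\|\,|\,A_\w)\bigr)$. Next, writing $\w=\int_0^\infty\mathsf{I}_{A_c}\,\dd c$ with $A_c:=\{\w>c\}$ a symmetric convex set (evenness and unimodality of $\w$; cf.\ Remark~\ref{rem:mix}) and using Tonelli's theorem on numerator and denominator, I would reduce further to proving, for every symmetric convex $A$ with $\P(\ZZ\in A)>0$,
\begin{equation*}
	e^{-\|\vv\|^2/2}\,\P(\ZZ\in A)\ \le\ \P(\ZZ-\vv\in A)\ \le\ r_{\|\vv\|}\bigl(\de^*(\tfrac{\vv}{\|\vv\|}\,|\,A)\bigr)\,\P(\ZZ\in A);
\end{equation*}
indeed, integrating the left inequality over $c$ gives the lower bound, and integrating the right one and using $\de^*(\cdot\,|\,A_c)\le\de^*(\cdot\,|\,A_\w)$ and the monotonicity of $r_{\|\vv\|}$ gives the upper bound (the values $c$ with $\P(\ZZ\in A_c)=0$ contribute $0$ to both sides, since then $\P(\ZZ-\vv\in A_c)=0$ too by mutual absolute continuity of the laws of $\ZZ$ and $\ZZ-\vv$).

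For the lower bound in the displayed two--sided estimate, note that with respect to $\ga_n$ the law of $\ZZ-\vv$ has density $\y\mapsto e^{-\ip\vv\y-\|\vv\|^2/2}$, so $\P(\ZZ-\vv\in A)=e^{-\|\vv\|^2/2}\,\E\bigl[\mathsf{I}_A(\ZZ)\,e^{-\ip\vv\ZZ}\bigr]$. Since $A$ is symmetric and $\ZZ\overset{d}{=}-\ZZ$, we have $\E[\mathsf{I}_A(\ZZ)\ip\vv\ZZ]=0$, so Jensen's inequality for the convex function $x\mapsto e^{x}$, taken under the probability measure proportional to $\mathsf{I}_A(\ZZ)\,\dd\P$, gives $\E[\mathsf{I}_A(\ZZ)e^{-\ip\vv\ZZ}]\ge\P(\ZZ\in A)$. (This part uses only the symmetry of $A$, not its convexity.)

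For the upper bound, put $\uu:=\vv/\|\vv\|$, $s:=\|\vv\|$, write $\ZZ=Z\uu+\ZZ'$ with $Z:=\ip\uu\ZZ\sim N(0,1)$ and $\ZZ'\in\uu^\perp$ independent, and set $q(z):=\ga_{\uu^\perp}\bigl(\{\z'\in\uu^\perp\colon z\uu+\z'\in A\}\bigr)$, so $\P(\ZZ-\vv\in A)=\E\,q(Z-s)$ and $\P(\ZZ\in A)=\E\,q(Z)$. The function $q\colon\R\to[0,\infty)$ is even (symmetry of $A$) and, by Theorem~A applied with $F(z,\z'):=\mathsf{I}_A(z\uu+\z')\,e^{-\|\z'\|^2/2}$, log concave; hence $q$ is even and unimodal with $\max q=q(0)\in(0,\infty)$, and trivially $b:=\sup\{z\colon q(z)>0\}\le\de^*(\uu\,|\,A)$. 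Applying a layer--cake representation to $q$ as well, I find that for $c\in(0,q(0))$ the super-level set $\{q>c\}$ is a nondegenerate symmetric interval with endpoints $\pm\be_c$ where $0<\be_c\le b$, so, $Z$ being atomless,
\begin{equation*}
	\E\,q(Z-s)=\int_0^{q(0)}\!\bigl(\Phi(s+\be_c)-\Phi(s-\be_c)\bigr)\,\dd c
	=\int_0^{q(0)}\!r_s(\be_c)\,\bigl(\Phi(\be_c)-\Phi(-\be_c)\bigr)\,\dd c\ \le\ r_s(b)\,\E\,q(Z),
\end{equation*}
the middle equality being the definition of $r_s(\be_c)$ and the last step using $\be_c\le b$ and the monotonicity of $r_s$; as $r_s(b)\le r_s\bigl(\de^*(\uu\,|\,A)\bigr)$, this is the claimed bound.

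Finally, the needed monotonicity: for fixed $s\ge0$ the map $a\mapsto r_s(a)$ is nondecreasing on $[0,\infty]$ (the case $s=0$ being trivial since then $r_0\equiv1$; in general this also yields $r_s(a)\le r_s(\infty)=1$). On $(0,\infty)$ write $r_s=f/g$ with $f(a):=\Phi(s+a)-\Phi(s-a)$ and $g(a):=2\Phi(a)-1$; then $f(0+)=g(0+)=0$, the functions $g$ and $g'=2\vpi$ are positive on $(0,\infty)$, and a short calculation gives
\begin{equation*}
	\frac{f'(a)}{g'(a)}=\frac{\vpi(s+a)+\vpi(s-a)}{2\vpi(a)}=e^{-s^2/2}\cosh(sa),
\end{equation*}
which is increasing in $a$; so Theorem~B shows that $r_s=f/g$ is increasing on $(0,\infty)$, and this extends to $[0,\infty]$ via $r_s(0+)=\vpi(s)/\vpi(0)=e^{-s^2/2}=r_s(0)$ and $r_s(\infty-)=1=r_s(\infty)$. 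I do not expect a single hard step here: the content is the interplay of Pr\'ekopa--Leindler (Theorem~A) for the unimodality of $q$, the layer--cake representation used twice, and the l'Hospital-type rule (Theorem~B); the mildly delicate routine points are the two Tonelli interchanges, the harmless open/closed ambiguity of the super-level sets, and discarding the $c$ with $\P(\ZZ\in A_c)=0$, whereas the only real computation is $f'/g'=e^{-s^2/2}\cosh(sa)$.
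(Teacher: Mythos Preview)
Your proof is correct and follows essentially the same route as the paper: reduce to the standard Gaussian via $\X=\Si^{1/2}\ZZ$, use the layer-cake representation $\w=\int_0^\infty\mathsf{I}_{A_c}\,dc$ to reduce to indicators of symmetric convex sets, slice along $\uu$ and apply Theorem~A to obtain an even log-concave one-dimensional profile, and finally invoke the monotonicity of $a\mapsto r_s(a)$ proved via Theorem~B. The one place you deviate is the lower bound: the paper extracts it from the same one-dimensional reduction (using $r_s(a)\ge r_s(0)=e^{-s^2/2}$ from Lemma~\ref{lem:}), whereas you give a direct Jensen argument based on $\P(\ZZ-\vv\in A)=e^{-\|\vv\|^2/2}\E[\mathsf{I}_A(\ZZ)e^{-\ip\vv\ZZ}]$ together with $\E[\mathsf{I}_A(\ZZ)\ip\vv\ZZ]=0$ by symmetry. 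Your alternative is slightly more elementary at that step and, as you observe, uses only the symmetry of $A$ rather than its convexity; the paper's approach, on the other hand, gets both inequalities from a single unified mechanism.
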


The necessary proofs will be given in Section~\ref{proofs}. 

In the special case of a standard Gaussian random vector, the statement of Theorem~\ref{th:} can be simplified: 

\begin
{corollary}\label{cor:w,Z}
For any  
real $t\ge0$
\begin{equation}\label{eq:'}
	e^{-t^2/2}=r_{t}(0)  
	\le\frac{\E \w(\ZZ-t\uu)}{\E \w(\ZZ)}  
	\le r_{t}(a_{\w,\uu})\le1,  
\end{equation}
where 
\begin{equation}\label{eq:a_w,v}
	a_{\w,\uu}:=
	a_{I_n,\w,\uu}=\de^*(\uu|A_\w). 
\end{equation}
\end{corollary}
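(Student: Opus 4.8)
The plan is to deduce the corollary directly from Theorem~\ref{th:} by specializing $\Si = I_n$. First I would substitute $\Si = I_n$ into the inequality \eqref{eq:}: then $\Si^{-1} = \Si^{-1/2} = I_n$, so $\ip\uu{\Si^{-1}\uu} = \ip\uu\uu = \|\uu\|^2 = 1$ since $\uu$ is a unit vector, and likewise $\|\Si^{-1/2}\uu\| = \|\uu\| = 1$. Hence the left-hand bound $e^{-t^2\ip\uu{\Si^{-1}\uu}/2} = r_{t\|\Si^{-1/2}\uu\|}(0)$ becomes $e^{-t^2/2} = r_t(0)$, matching the left side of \eqref{eq:'}. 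Similarly the random vector $\X$ with covariance $\Si = I_n$ is exactly $\ZZ$, so $\E\w(\X - t\uu)/\E\w(\X)$ becomes $\E\w(\ZZ - t\uu)/\E\w(\ZZ)$.

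Next I would check the right-hand bound. With $\Si = I_n$ the quantity $a_{\Si,\w,\uu} = \de^*(\Si^{-1}\uu|A_\w)/\|\Si^{-1/2}\uu\| = \de^*(\uu|A_\w)/1 = \de^*(\uu|A_\w)$, which is precisely the definition of $a_{\w,\uu}$ given in \eqref{eq:a_w,v}. Therefore $r_{t\|\Si^{-1/2}\uu\|}(a_{\Si,\w,\uu}) = r_t(a_{\w,\uu})$, and the chain $r_t(a_{\w,\uu}) \le 1$ is inherited from the corresponding inequality $r_{t\|\Si^{-1/2}\uu\|}(a_{\Si,\w,\uu}) \le 1$ in Theorem~\ref{th:}. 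Assembling these three pieces yields \eqref{eq:'} verbatim.

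There is essentially no obstacle here: the corollary is a pure specialization, and every term in Theorem~\ref{th:} collapses to the corresponding term in the corollary once one notes that $\ZZ$ has covariance $I_n$ and that $\uu$ is a unit vector. The only point requiring the tiniest care is the bookkeeping identity $a_{I_n,\w,\uu} = \de^*(\uu|A_\w)$, which is immediate from the definitions of $a_{\Si,\w,\uu}$ and $\de^*$, together with the fact that both $\Si^{-1}\uu$ and the normalizing factor $\|\Si^{-1/2}\uu\|$ reduce trivially when $\Si = I_n$.
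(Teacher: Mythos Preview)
Your derivation is formally correct: Corollary~\ref{cor:w,Z} is indeed the specialization $\Si=I_n$ of Theorem~\ref{th:}, and every substitution you make is accurate. However, this is not the route the paper takes, and in the paper's logical order your argument would be circular. The paper explicitly proves the results in the reverse direction: first Corollary~\ref{cor:A'} (indicator functions, standard Gaussian) is established directly via Theorem~A and Lemma~\ref{lem:}; then Corollary~\ref{cor:w,Z} is deduced from Corollary~\ref{cor:A'} using the layer-cake representation $\w(\x)=\int_0^\infty \mathsf{I}_{A_{\w,c}}(\x)\,dc$ of Remark~\ref{rem:mix} together with the monotonicity of $a\mapsto r_t(a)$; and only after that is Theorem~\ref{th:} obtained from Corollary~\ref{cor:w,Z} by the change of variables $\tX=\Si^{-1/2}\X$. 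The paper even flags this inversion in the text following Corollary~\ref{cor:w,Z}.

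What each approach buys: your specialization is a one-line check once Theorem~\ref{th:} is in hand, but it presupposes that theorem. The paper's approach builds the chain from the bottom up, so that the only place real analytic work happens is in the proof of Corollary~\ref{cor:A'}; the passage to general $\w$ and then to general $\Si$ is then essentially formal. If you want a self-contained proof of Corollary~\ref{cor:w,Z} that does not lean on Theorem~\ref{th:}, you should supply the layer-cake argument from Corollary~\ref{cor:A'}.
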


Even though Corollary~\ref{cor:w,Z} is a special case of Theorem~\ref{th:}, it will be seen that, vice versa, Theorem~\ref{th:} can be easily obtained from Corollary~\ref{cor:w,Z}. 

Letting 
$\w=\mathsf I_A$, 
we see that Theorem~\ref{th:} and Corollary~\ref{cor:w,Z} 
immediately imply the following two corollaries.  

\begin{corollary}\label{cor:A} 
For any 
real $t\ge0$ 
\begin{equation}\label{eq:A}
\begin{aligned}
	e^{-t^2\ip\uu{\Si^{-1}\uu}/2}=r_{t\|\Si^{-1/2}\uu\|}(0) 
	\le\frac{\P(\X\in t\uu+A)}{\P(\X\in A)}  
	\le r_{t\|\Si^{-1/2}\uu\|}(a_{\Si,A,\uu})\le1,   
\end{aligned}	
\end{equation}
where 
\begin{equation*}
	a_{\Si,A,\uu}:=\frac{\de^*(\Si^{-1}\uu|A)}{\|\Si^{-1/2}\uu\|} 
	\in[0,\infty].  
\end{equation*}
\end{corollary}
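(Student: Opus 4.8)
The plan is to deduce Corollary~\ref{cor:A} from Theorem~\ref{th:} by the specialization $\w=\mathsf I_A$, exactly as announced in the sentence preceding the statement. First I would check that $\mathsf I_A$ satisfies the standing hypotheses on $\w$: since $A$ is symmetric, $\mathsf I_A$ is even; since $A$ is convex, $\mathsf I_A$ is unimodal, because its upper level set $\{\xx\in\R^n\colon\mathsf I_A(\xx)>c\}$ equals $\R^n$ for $c<0$, equals $A$ for $c\in[0,1)$, and equals $\varnothing$ for $c\ge1$, each of which is convex; finally $\E\,\mathsf I_A(\ZZ)=\P(\ZZ\in A)>0$ by the standing assumption on $A$. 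Hence Theorem~\ref{th:} is applicable with $\w=\mathsf I_A$.

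Next I would translate the three pieces of \eqref{eq:} into the three pieces of \eqref{eq:A}. The expectations become probabilities: $\mathsf I_A(\X-t\uu)=\ii{\X\in t\uu+A}$ gives $\E\,\mathsf I_A(\X-t\uu)=\P(\X\in t\uu+A)$, and likewise $\E\,\mathsf I_A(\X)=\P(\X\in A)$, so the central ratio in \eqref{eq:} becomes the one in \eqref{eq:A}. For the parameter of $r$, by \eqref{eq:A_f} we have $A_{\mathsf I_A}=\{\xx\in\R^n\colon\mathsf I_A(\xx)>0\}=A$, whence $a_{\Si,\mathsf I_A,\uu}=\de^*(\Si^{-1}\uu|A)/\|\Si^{-1/2}\uu\|=a_{\Si,A,\uu}$; the outer expressions $r_{t\|\Si^{-1/2}\uu\|}(0)=e^{-t^2\ip\uu{\Si^{-1}\uu}/2}$ and $r_{t\|\Si^{-1/2}\uu\|}(a_{\Si,A,\uu})\le1$ then coincide verbatim with those in \eqref{eq:A}. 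Substituting all of this into \eqref{eq:} yields \eqref{eq:A}.

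Since every step is a direct substitution, I do not expect any real obstacle; the only items needing a (one-line) verification are the unimodality of $\mathsf I_A$ --- i.e.\ that convexity of $A$ is precisely what the definition of unimodality requires --- and the matching of the non-degeneracy conditions $\P(\ZZ\in A)>0$ and $\E\,\w(\ZZ)>0$. All the analytic content of the corollary is inherited from Theorem~\ref{th:}, whose proof (built on Theorem~A and Theorem~B) is deferred to Section~\ref{proofs}.
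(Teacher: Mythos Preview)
Your proposal is correct and follows exactly the paper's own approach: the paper states explicitly that Corollary~\ref{cor:A} is obtained from Theorem~\ref{th:} by letting $\w=\mathsf I_A$, and your verification of the hypotheses and the identification $A_{\mathsf I_A}=A$, $a_{\Si,\mathsf I_A,\uu}=a_{\Si,A,\uu}$ is precisely the required check.
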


\begin{corollary}\label{cor:A'}
For any 
real $t\ge0$
\begin{equation}\label{eq:A'}
	e^{-t^2/2}=r_{t}(0)  
	\le\frac{\ga_n(t\uu+A)}{\ga_n(A)} 
	\le r_{t}(a_{A,\uu})\le1,   
\end{equation}
where 
\begin{equation}\label{eq:a_A}
	a_{A,\uu}:=a_{I_n,A,\uu}=\de^*(\uu|A).  
\end{equation}
\end{corollary}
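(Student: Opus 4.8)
The plan is to obtain Corollary~\ref{cor:A'} as the literal specialization of Corollary~\ref{cor:w,Z} to the function $\w=\mathsf I_A$. First I would check that this $\w$ satisfies the standing assumptions on the weight: since $A$ is convex, every superlevel set $\{\x\in\R^n\colon\mathsf I_A(\x)>c\}$ equals $\R^n$, $A$, or $\emptyset$ according as $c<0$, $0\le c<1$, or $c\ge1$, and so is convex; hence $\mathsf I_A$ is unimodal. Since $A=-A$ (symmetry), $\mathsf I_A$ is even. Finally $\E\mathsf I_A(\ZZ)=\P(\ZZ\in A)=\ga_n(A)>0$ by the standing assumption on $A$, so Corollary~\ref{cor:w,Z} applies to $\w=\mathsf I_A$.

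Next I would identify the two sides of the inequality chain \eqref{eq:'} under this substitution. For the middle ratio, $\E\mathsf I_A(\ZZ)=\ga_n(A)$ as above, while $\E\mathsf I_A(\ZZ-t\uu)=\P(\ZZ-t\uu\in A)=\P(\ZZ\in t\uu+A)=\ga_n(t\uu+A)$; thus the ratio $\E\w(\ZZ-t\uu)/\E\w(\ZZ)$ in \eqref{eq:'} becomes $\ga_n(t\uu+A)/\ga_n(A)$. For the upper-bound constant, $A_\w=\{\x\in\R^n\colon\mathsf I_A(\x)>0\}=A$, so by \eqref{eq:a_w,v} and \eqref{eq:a_A} we get $a_{\w,\uu}=\de^*(\uu|A_\w)=\de^*(\uu|A)=a_{A,\uu}$. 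The two flanking equalities $e^{-t^2/2}=r_t(0)$ and the final bound $r_t(a_{A,\uu})\le1$ are already part of \eqref{eq:'} (the first by the definition \eqref{eq:r} of $r_t$). Substituting these identifications into \eqref{eq:'} reproduces \eqref{eq:A'} verbatim, which completes the proof.

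There is essentially no obstacle here; the corollary is an instance of Corollary~\ref{cor:w,Z}. The only points that need even momentary attention are the verification that $\mathsf I_A$ is even and unimodal — which is precisely where the hypotheses that $A$ is symmetric and convex are used — and the bookkeeping identity $A_{\mathsf I_A}=A$, which makes the constant $a_{\w,\uu}$ coincide with $a_{A,\uu}$ of \eqref{eq:a_A}.
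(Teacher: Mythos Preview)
Your argument is formally correct: once Corollary~\ref{cor:w,Z} is available, plugging in $\w=\mathsf I_A$ and checking the standing hypotheses gives \eqref{eq:A'} exactly as you write. This is in fact the route by which the paper \emph{states} Corollary~\ref{cor:A'} in Section~\ref{summary} (``Letting $\w=\mathsf I_A$, we see that \dots\ Corollary~\ref{cor:w,Z} immediately impl[ies] \dots'').

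However, the paper's \emph{proof} goes in the opposite direction. In Section~\ref{proofs} the authors first prove Corollary~\ref{cor:A'} from scratch: after reducing to $\uu=\e_1$ by spherical symmetry, they write $\ga_n(t\uu+A)=\int\vpi(x+t)h_{A,\uu}(x)\,dx$ with $h_{A,\uu}$ the Gaussian marginal of $\mathsf I_A$, use Theorem~A to see that $h_{A,\uu}$ is even and log concave (hence a mixture $\int\mu_{A,\uu}(da)\,\mathsf I_{\{|x|<a\}}$), and then apply Lemma~\ref{lem:} on the monotonicity of $a\mapsto r_t(a)$ to each slab $\{|x|<a\}$. Only afterwards is Corollary~\ref{cor:w,Z} deduced from Corollary~\ref{cor:A'} via the layer-cake decomposition in Remark~\ref{rem:mix}, and Theorem~\ref{th:} from Corollary~\ref{cor:w,Z} by rescaling. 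So within the paper's logical order your derivation would be circular: you are invoking Corollary~\ref{cor:w,Z}, whose proof in the paper rests on Corollary~\ref{cor:A'}. Your write-up would be fine as a remark that Corollary~\ref{cor:A'} is a specialization of Corollary~\ref{cor:w,Z}, but it does not replace the direct proof the paper actually supplies.
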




Let us also present the following statement on the exactness of the lower and upper bounds on $\E \w(\X-t\uu)$ given in Theorem~\ref{th:}: 

\begin{proposition}\label{prop:exact} \rule{0pt}{0pt} 
\begin{enumerate}[(i)]
	\item For any positive-definite symmetric matrix $\Si$, any unit vector $\uu\in\R^n$, and any real $t\ge0$, the lower bound $e^{-t^2\ip\uu{\Si^{-1}\uu}/2}$ in \eqref{eq:} cannot be replaced by any strictly greater number. 
	\item For any positive-definite symmetric matrix $\Si$, any unit vector $\uu\in\R^n$, any real $t\ge0$, and any $a\in[0,\infty]$, there is an even 
	unimodal function $\w\colon\R^n\to[0,\infty)$ such that $a_{\Si,\w,\uu}=a$ and the second equality in \eqref{eq:} turns into the equality. 
\end{enumerate}
\end{proposition}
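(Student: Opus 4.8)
The plan is to prove both assertions with a single explicit family of admissible functions — the indicators $\mathsf{I}_{A_a}$ of symmetric slabs $A_a$ orthogonal to $\Si^{-1}\uu$, for $a\in(0,\infty)$ — supplemented by the constant function for the endpoint $a=\infty$, and to reduce all the needed expectations to a one-dimensional Gaussian computation by way of the scalar marginal $\ip{\Si^{-1}\uu}\X$, which is centered normal with variance $\ip{\Si^{-1}\uu}{\Si\Si^{-1}\uu}=\ip\uu{\Si^{-1}\uu}=\|\Si^{-1/2}\uu\|^2$. Throughout I write $s:=t\|\Si^{-1/2}\uu\|$, so that the two ends of \eqref{eq:} are $r_s(0)=e^{-s^2/2}$ and $r_s(a_{\Si,\w,\uu})$.

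First I would treat part (ii) for $a\in(0,\infty)$. Take $\w=\w_a:=\mathsf{I}_{A_a}$ with
\begin{equation*}
	A_a:=\bigl\{\x\in\R^n\colon |\ip{\Si^{-1}\uu}\x|<a\,\|\Si^{-1/2}\uu\|\bigr\}.
\end{equation*}
Since $\Si$ is nonsingular, $\Si^{-1}\uu\neq\0$, so $A_a$ is a nonempty symmetric convex slab; hence $\w_a$ is even and unimodal, and $\E\w_a(\ZZ)>0$ because $\ip{\Si^{-1}\uu}\ZZ$ is a nondegenerate centered normal variable. Moreover $A_{\w_a}=A_a$ is the preimage of $\bigl(-a\|\Si^{-1/2}\uu\|,\,a\|\Si^{-1/2}\uu\|\bigr)$ under the onto linear functional $\x\mapsto\ip{\Si^{-1}\uu}\x$, so by \eqref{eq:de*} one gets $\de^*(\Si^{-1}\uu|A_{\w_a})=a\|\Si^{-1/2}\uu\|$ and therefore $a_{\Si,\w_a,\uu}=a$; the slab half-width $a\|\Si^{-1/2}\uu\|$ is chosen precisely so that this holds. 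Passing to the standard normal variable $Y:=\ip{\Si^{-1}\uu}\X/\|\Si^{-1/2}\uu\|$ and using $\ip{\Si^{-1}\uu}{t\uu}=t\|\Si^{-1/2}\uu\|^2$, the events $\{\w_a(\X-t\uu)=1\}$ and $\{\w_a(\X)=1\}$ become $\{|Y-s|<a\}$ and $\{|Y|<a\}$, whence
\begin{equation*}
	\frac{\E\w_a(\X-t\uu)}{\E\w_a(\X)}=\frac{\Phi(s+a)-\Phi(s-a)}{\Phi(a)-\Phi(-a)}=r_s(a)=r_{t\|\Si^{-1/2}\uu\|}(a_{\Si,\w_a,\uu}),
\end{equation*}
so that the upper bound in \eqref{eq:} becomes an equality for $\w=\w_a$. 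For $a=\infty$ the constant function $\w\equiv1$ does the job: it is even and unimodal, $A_\w=\R^n$ forces $a_{\Si,\w,\uu}=\infty$, and $\E\w(\X-t\uu)/\E\w(\X)=1=r_{t\|\Si^{-1/2}\uu\|}(\infty)$.

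Next I would deduce part (i). By Theorem~\ref{th:}, every even unimodal $\w$ with $\E\w(\ZZ)>0$ satisfies $\E\w(\X-t\uu)/\E\w(\X)\ge e^{-t^2\ip\uu{\Si^{-1}\uu}/2}=r_s(0)$. On the other hand, the slabs $\w_a$ constructed above give the ratio exactly $r_s(a)$, and $r_s(a)\to r_s(0)=e^{-s^2/2}$ as $a\downarrow0$ — this is the elementary fact that $a\mapsto r_s(a)$ is continuous at $0$, which is the reason for the value $e^{-s^2/2}$ assigned there in \eqref{eq:r}. Hence $e^{-t^2\ip\uu{\Si^{-1}\uu}/2}$ is the exact infimum of $\E\w(\X-t\uu)/\E\w(\X)$ over all admissible $\w$, so it cannot be replaced by any larger number; this is (i). The case $a=0$ of (ii) — in which the upper and lower bounds in \eqref{eq:} collapse to the common value $r_s(0)$ — is exactly this limiting situation.

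I do not expect a real obstacle here. The only steps needing care are (a)~calibrating the slab half-width so that $a_{\Si,\w_a,\uu}$ and the ratio land on the prescribed $a$ and on $r_s(a)$ simultaneously, which is what the passage to the one-dimensional marginal $\ip{\Si^{-1}\uu}\X$ is designed to ensure, and (b)~the two degenerate endpoints, $a=0$ (attained only in the limit — this is assertion (i)) and $a=\infty$ (handled by the constant function). As a variant, one may first reduce to $\Si=I_n$ by writing $\X=\Si^{1/2}\ZZ$ and replacing $\w$ by $\w(\Si^{1/2}\,\cdot\,)$ and then quote Corollary~\ref{cor:w,Z}, but the direct argument above is just as short.
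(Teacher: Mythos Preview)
Your proof is correct and follows essentially the same approach as the paper: exhibit the indicator of a symmetric slab as the extremal $\w$, compute the resulting ratio as $r_s(a)$ via a one-dimensional Gaussian marginal, and obtain (i) by sending $a\downarrow0$ using the continuity of $r_s$ at $0$. The paper first reduces to $\Si=I_n$ and then takes the slab $\{\z:|\ip\z\uu|\le a\}$, whereas you work directly with general $\Si$ via the slab orthogonal to $\Si^{-1}\uu$ and the scalar $Y=\ip{\Si^{-1}\uu}\X/\|\Si^{-1/2}\uu\|$; as you note at the end, these are two descriptions of the same construction.
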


Letting again 
$\w=\mathsf I_A$, we see that Proposition~\ref{prop:exact} follows immediately from the corresponding statement on the exactness of the lower and upper bounds on $\P(\X\in t\uu+A)$ given in Corollary~\ref{cor:A}:

\begin{proposition}\label{prop:exactA} \rule{0pt}{0pt} 
\begin{enumerate}[(i)]
	\item For any positive-definite symmetric matrix $\Si$, any unit vector $\uu\in\R^n$, and any real $t\ge0$, the lower bound $e^{-t^2\ip\uu{\Si^{-1}\uu}/2}$ in \eqref{eq:A} cannot be replaced by any strictly greater number. 
	\item For any positive-definite symmetric matrix $\Si$, any unit vector $\uu\in\R^n$,  any real $t\ge0$, and any $a\in[0,\infty]$, there is a symmetric convex set $A\subseteq\R^n$ such that $a_{\Si,A,\uu}=a$ and the second equality in \eqref{eq:A} turns into the equality. 
\end{enumerate}
\end{proposition}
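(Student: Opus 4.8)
The plan is to prove Proposition~\ref{prop:exactA} by exhibiting explicit extremal sets, since the two inequalities in \eqref{eq:A} themselves are already part of Corollary~\ref{cor:A}. The device that trivializes every computation is a one-dimensional reduction: for any $\vv\in\R^n$ the random variable $\ip\X\vv$ is centered normal with variance $\ip\vv{\Si\vv}$, so for $\vv:=\Si^{-1}\uu$, using $\ip{\Si^{-1}\uu}{\Si\,\Si^{-1}\uu}=\ip\uu{\Si^{-1}\uu}=\|\Si^{-1/2}\uu\|^2$, the variable $N:=\ip\X{\Si^{-1}\uu}/\|\Si^{-1/2}\uu\|$ is standard normal. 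I write $s:=t\,\|\Si^{-1/2}\uu\|$, so that $s^2=t^2\ip\uu{\Si^{-1}\uu}$ and $t\,\ip\uu{\Si^{-1}\uu}=s\,\|\Si^{-1/2}\uu\|$.

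For part~(ii) with $a\in(0,\infty)$ I would take the slab
\begin{equation*}
	A=A_a:=\{\x\in\R^n\colon |\ip\x{\Si^{-1}\uu}|\le a\,\|\Si^{-1/2}\uu\|\},
\end{equation*}
which is symmetric and convex and satisfies $\de^*(\Si^{-1}\uu|A_a)=a\,\|\Si^{-1/2}\uu\|$ (the supremum being attained along the ray through $\Si^{-1}\uu$), whence $a_{\Si,A_a,\uu}=a$. The reduction above gives $\P(\X\in A_a)=\P(|N|\le a)=\Phi(a)-\Phi(-a)$, and since $\X\in t\uu+A_a$ is equivalent to $|\ip\X{\Si^{-1}\uu}-s\,\|\Si^{-1/2}\uu\||\le a\,\|\Si^{-1/2}\uu\|$, also $\P(\X\in t\uu+A_a)=\P(|N-s|\le a)=\Phi(s+a)-\Phi(s-a)$; hence the middle ratio in \eqref{eq:A} equals $(\Phi(s+a)-\Phi(s-a))/(\Phi(a)-\Phi(-a))=r_{t\|\Si^{-1/2}\uu\|}(a)$, i.e.\ the bound $\P(\X\in t\uu+A)\le r_{t\|\Si^{-1/2}\uu\|}(a_{\Si,A,\uu})\,\P(\X\in A)$ holds with equality. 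For $a=\infty$ one takes $A:=\R^n$, so $\de^*(\Si^{-1}\uu|A)=\infty$, $a_{\Si,A,\uu}=\infty$, both probabilities equal $1$, and $r_{t\|\Si^{-1/2}\uu\|}(\infty)=1$. For the boundary value $a=0$ one takes $A:=\{\0\}$ (or any subset of the hyperplane $\{\x\colon\ip\x{\Si^{-1}\uu}=0\}$): then $a_{\Si,A,\uu}=0$, both probabilities vanish, and the second inequality in \eqref{eq:A}, read after clearing the denominator $\P(\X\in A)$, becomes the trivial identity $0=0$.

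Part~(i) then follows from the same family of slabs: by Corollary~\ref{cor:A} and the computation just made, $A_\ep$ (with $a=\ep>0$) gives $\P(\X\in t\uu+A_\ep)/\P(\X\in A_\ep)=r_{t\|\Si^{-1/2}\uu\|}(\ep)$, and a one-line Taylor (or l'Hospital) estimate yields $r_s(\ep)=(\Phi(s+\ep)-\Phi(s-\ep))/(\Phi(\ep)-\Phi(-\ep))\to\vpi(s)/\vpi(0)=e^{-s^2/2}=e^{-t^2\ip\uu{\Si^{-1}\uu}/2}$ as $\ep\downarrow0$. Thus the left-hand side of \eqref{eq:A} is exactly the infimum of $\P(\X\in t\uu+A)/\P(\X\in A)$ over symmetric convex $A$ with $\P(\X\in A)>0$, so it cannot be replaced by any strictly larger number. (Equivalently, for part~(i) one may use shrinking ellipsoids $A=\rho\,\Si^{1/2}B$, with $B$ the Euclidean unit ball and $\rho\downarrow0$, for which $a_{\Si,A,\uu}=\rho$ and the ratio tends to the ratio $e^{-t^2\ip\uu{\Si^{-1}\uu}/2}$ of the Gaussian densities of $\X$ at $t\uu$ and $\0$, so that both bounds in \eqref{eq:A} converge to it.) I expect no serious obstacle: everything collapses to the one-dimensional identities for $N$, and the only point needing a little care is the degenerate case $a=0$ of part~(ii), where the constraint $a_{\Si,A,\uu}=0$ forces $A$ into a hyperplane and hence $\P(\X\in A)=0$, so \eqref{eq:A} must be read in its cleared-denominator form.
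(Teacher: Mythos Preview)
Your proposal is correct and takes essentially the same approach as the paper: both use slab sets orthogonal (in the appropriate $\Si^{-1}$-sense) to $\uu$ to achieve equality in the upper bound, and let the slab width shrink to $0$ to show the lower bound is sharp. The only cosmetic difference is that the paper first reduces to $\Si=I_n$ (so the slab is simply $\{\z:|\ip\z\uu|\le a\}$ and the computation uses $\ip\ZZ\uu\sim N(0,1)$), whereas you work directly with general $\Si$ via the standard-normal variable $N=\ip\X{\Si^{-1}\uu}/\|\Si^{-1/2}\uu\|$; your handling of the degenerate case $a=0$ (cleared-denominator reading) is in fact more explicit than the paper's.
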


It follows that the lower and upper bounds given in Corollaries~\ref{cor:w,Z} and \ref{cor:A'} for the special case $\Si=I_n$ are also exact, in the corresponding sense. 

However, the lower bounds in Theorems~\ref{th:} and 
Corollaries~\ref{cor:w,Z}, \ref{cor:A}, and \ref{cor:A'} can be refined as shown in the following subsection.

\subsection{
Bounds on the 
the rate of 
change of \texorpdfstring{$\E \w(\X-t\uu)$}{} in \texorpdfstring{$t$}{}  
}\label{rel-decr} 

\begin{theorem}\label{th:der}
If $\E \w(\X)<\infty$, then for 
any real $t\ge0$ 
\begin{equation}\label{eq:der}
\frac d{dt}\,\E \w(\X-t\uu)\ge-t\,\ip\uu{\Si^{-1}\uu}\,\E \w(\X-t\uu). 	
\end{equation}
\end{theorem}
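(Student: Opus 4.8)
The plan is to reduce the differential inequality \eqref{eq:der} to the already-established lower bound in Theorem~\ref{th:}. Observe that \eqref{eq:der} is equivalent to the statement that the function
\[
t\longmapsto e^{t^2\ip\uu{\Si^{-1}\uu}/2}\,\E\w(\X-t\uu)
\]
is nondecreasing on $[0,\infty)$: indeed, differentiating this product and using the product rule, the derivative is $e^{t^2\ip\uu{\Si^{-1}\uu}/2}$ times the left-hand side minus the right-hand side of \eqref{eq:der}. So it suffices to show that $h(t):=e^{t^2\ip\uu{\Si^{-1}\uu}/2}\,\E\w(\X-t\uu)$ is nondecreasing. By the semigroup-type structure of the problem, for $0\le s\le t$ one can write $\E\w(\X-t\uu)$ in terms of $\E\w(\X-s\uu)$ shifted by a further $(t-s)\uu$, and the lower bound of Theorem~\ref{th:}, applied with base point $s$ and extra shift $(t-s)\uu$, should give exactly
\[
\E\w(\X-t\uu)\ge e^{-(t-s)^2\ip\uu{\Si^{-1}\uu}/2}\,\E\w(\X-s\uu),
\]
provided the relevant expectation is that of an even unimodal function. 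The subtlety is that $\w(\cdot-s\uu)$ is not even, so Theorem~\ref{th:} does not apply verbatim; I expect this to be the main obstacle and address it next.

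To handle the loss of evenness, I would symmetrize. The natural device is to average the shift over $\pm$: for the Gaussian law of $\X$, the key observation (essentially the content of the Prékopa--Leindler/Theorem~A machinery used to prove Theorem~\ref{th:}, cf.\ the log-concave reduction mentioned in the Introduction via Remark~\ref{rem:mix}) is that $t\mapsto \E\w(\X-t\uu)$ can be analyzed through the one-dimensional marginal of $\X$ in the direction $\Si^{-1}\uu$ conditioned on the orthogonal complement; along each such line the relevant function is a one-dimensional even unimodal function convolved with a Gaussian density, and the quantity $e^{t^2\ip\uu{\Si^{-1}\uu}/2}\E\w(\X-t\uu)$ decomposes as an average of expressions of the form $e^{\tau^2/2}\,\int g(x)\,\vpi(x-\tau)\,dx$ with $g$ even unimodal and $\tau=t\|\Si^{-1/2}\uu\|$. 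For such a one-dimensional expression, I would prove directly that it is nondecreasing in $\tau\ge0$: write $\psi(\tau):=e^{\tau^2/2}\int g(x)\vpi(x-\tau)\,dx = \int g(x)\,e^{\tau x - x^2/2}\,\frac{dx}{\sqrt{2\pi}}$, so $\psi'(\tau)=\int x\,g(x)\,e^{\tau x - x^2/2}\,\frac{dx}{\sqrt{2\pi}}$, which is nonnegative when $g$ is even and nonincreasing in $|x|$ by pairing $x$ with $-x$ and using $e^{\tau x}\ge e^{-\tau x}$ for $x,\tau\ge0$ together with $g(x)=g(-x)$.

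The remaining steps are then routine: first, the reduction of the multivariate statement to the one-dimensional one along lines parallel to $\Si^{-1}\uu$, using the substitution $\X=\Si^{1/2}\ZZ$ and an orthogonal change of coordinates sending $\Si^{-1/2}\uu$ to a coordinate axis, exactly as in the proof of Theorem~\ref{th:}; second, an application of Fubini/differentiation under the integral sign, justified by the hypothesis $\E\w(\X)<\infty$ together with the domination $e^{t^2\ip\uu{\Si^{-1}\uu}/2}\E\w(\X-t\uu)\le \E\w(\X)$ that already follows from the upper bound $r\le 1$ in Theorem~\ref{th:} applied with base point shifted — more precisely, from the monotonicity in $t$ near $0$ — so that $h$ is locally bounded and the derivative exists a.e.\ with the stated sign; third, passing from ``nondecreasing'' back to the differential inequality \eqref{eq:der}, which is immediate wherever $\E\w(\X-t\uu)$ is differentiable in $t$, and this differentiability is itself a consequence of dominated convergence since $\w$ is bounded on the bounded set $A_\w$ only if $\w$ is bounded — in general one argues with the nondecreasing function $h$ directly, concluding that its right and left derivatives satisfy \eqref{eq:der} and that $\E\w(\X-t\uu)$, being $e^{-t^2\ip\uu{\Si^{-1}\uu}/2}h(t)$ with $h$ monotone, is differentiable for all but countably many $t$, which suffices. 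The one genuinely delicate point, as noted, is the symmetrization that lets Theorem~\ref{th:} (an evenness-based result) be applied incrementally; everything else is a change of variables plus a one-line one-dimensional monotonicity computation.
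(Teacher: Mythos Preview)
Your core idea is correct and is genuinely different from the paper's route. Both the paper and you reduce to the standard case $\Si=I_n$ by the substitution $\tX=\Si^{-1/2}\X$, $\tuu=\Si^{-1/2}\uu/\|\Si^{-1/2}\uu\|$, $\ttt=t\|\Si^{-1/2}\uu\|$. From there, however, the arguments diverge. The paper layer-cakes $\w$ into indicators $\mathsf I_{A_{\w,c}}$, applies the one-dimensional marginal decomposition from the proof of Corollary~\ref{cor:A'} to obtain the measures $\mu_{A_{\w,c},\uu}$, and then invokes a separate Lemma (Lemma~\ref{lem:der}) asserting $0\ge g_a'(t)\ge -t\,g_a(t)$ for $g_a(t)=\Phi(a+t)-\Phi(-a+t)$; that lemma is proved by a nontrivial calculus computation showing that $\la_a(t):=g_a(t)+g_a'(t)/t$ has $\la_a'(t)\le0$ via the inequality $\tanh(at)\le at$. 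You instead write, after marginalizing to one dimension, $\psi(\tau)=e^{\tau^2/2}\int g(x)\vpi(x-\tau)\,dx=\int g(x)e^{\tau x-x^2/2}\,dx/\sqrt{2\pi}$ and observe that $\psi'(\tau)=\int_0^\infty x\,g(x)\,(e^{\tau x}-e^{-\tau x})\,e^{-x^2/2}\,dx/\sqrt{2\pi}\ge0$ by the evenness of $g$ alone. This pairing argument is shorter than the paper's Lemma~\ref{lem:der} and in fact does not use unimodality of the one-dimensional section $g$ at all (your phrase ``nonincreasing in $|x|$'' is superfluous), so your method would yield \eqref{eq:der} for any even nonnegative $\w$. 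What the paper's decomposition into intervals buys is a clean uniform domination $|g_a'(t)|\le t\,g_a(0)$, which makes the differentiation-under-the-integral step immediate from $\E\w(\ZZ)<\infty$.

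Two points in your write-up need tidying. First, your opening attempt to iterate Theorem~\ref{th:} is, as you note, blocked by the loss of evenness, and you never return to it; the actual argument is the direct one-dimensional computation, so the first paragraph is a detour rather than a step. Second, your domination claim ``$e^{t^2\ip\uu{\Si^{-1}\uu}/2}\E\w(\X-t\uu)\le \E\w(\X)$'' has the inequality backwards (Theorem~\ref{th:} gives $\ge$, not $\le$). For the differentiation under the integral you can instead argue without derivatives: for $0\le\tau_1\le\tau_2$, the difference $\psi(\tau_2)-\psi(\tau_1)=\int_0^\infty g(x)\bigl(\cosh(\tau_2 x)-\cosh(\tau_1 x)\bigr)\,2e^{-x^2/2}\,dx/\sqrt{2\pi}\ge0$ shows monotonicity directly; existence of $\tfrac{d}{dt}\E\w(\X-t\uu)$ can then be obtained separately, e.g.\ via the paper's layer-cake route or by noting that the derivative exists for bounded $\w$ and using monotone approximation.
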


\begin{corollary}\label{cor:der'}
If $\E \w(\ZZ)<\infty$, then for 
any real $t\ge0$ 
\begin{equation}\label{eq:der'}
\frac d{dt}\,\E \w(\ZZ-t\uu)\ge-t\,\E \w(\ZZ-t\uu). 	
\end{equation}
\end{corollary}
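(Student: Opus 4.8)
The quickest route is to observe that Corollary~\ref{cor:der'} is precisely the special case $\Si=I_n$ of Theorem~\ref{th:der}: since $\uu$ is a unit vector, $\ip\uu{\Si^{-1}\uu}=\|\uu\|^2=1$, and a zero-mean Gaussian vector with covariance $\Si=I_n$ has the distribution of $\ZZ$, so substituting into \eqref{eq:der} yields \eqref{eq:der'} verbatim (the hypothesis $\E\w(\X)<\infty$ becoming $\E\w(\ZZ)<\infty$). Granting Theorem~\ref{th:der}, there is nothing more to do; what follows is an alternative, self-contained argument exhibiting the one-dimensional mechanism behind it.

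Completing the square in the standard Gaussian density gives, for every real $t$,
\begin{equation*}
	\E\w(\ZZ-t\uu)=e^{-t^2/2}\,\E\bigl[\w(\ZZ)\,e^{-t\ip\ZZ\uu}\bigr];
\end{equation*}
write $h(t):=\E[\w(\ZZ)\,e^{-t\ip\ZZ\uu}]$, so that $\E\w(\ZZ-t\uu)=e^{-t^2/2}h(t)$. By Corollary~\ref{cor:w,Z}, together with the evenness of $\w$ and $-\ZZ\overset{d}{=}\ZZ$ to cover negative arguments, $\E\w(\ZZ-s\uu)\le\E\w(\ZZ)<\infty$ for \emph{all} real $s$, so $h$ is finite on all of $\R$; this furnishes exponential integrability of $\w(\ZZ)\,e^{-t\ip\ZZ\uu}$ locally uniformly in $t$, which legitimizes differentiating under the expectation, giving $h'(t)=-\E[\w(\ZZ)\,\ip\ZZ\uu\,e^{-t\ip\ZZ\uu}]$. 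Differentiating $\E\w(\ZZ-t\uu)=e^{-t^2/2}h(t)$ then shows that the claimed inequality \eqref{eq:der'} is equivalent to $h'(t)\ge0$ for $t\ge0$.

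To prove $h'(t)\ge0$, set $S:=\ip\ZZ\uu$ and combine $-\ZZ\overset{d}{=}\ZZ$ with $\w(-\x)=\w(\x)$ to get the identity $\E[\w(\ZZ)\,S\,e^{-tS}]=-\E[\w(\ZZ)\,S\,e^{tS}]$; taking the half-sum of its two sides gives
\begin{equation*}
	-h'(t)=\E\bigl[\w(\ZZ)\,S\,e^{-tS}\bigr]=\tfrac12\,\E\bigl[\w(\ZZ)\,S\,(e^{-tS}-e^{tS})\bigr]=-\,\E\bigl[\w(\ZZ)\,S\sinh(tS)\bigr]\le0,
\end{equation*}
because $\w\ge0$ and $S\sinh(tS)\ge0$ for all real $S$ and all $t\ge0$. (Only the evenness and nonnegativity of $\w$ enter this computation, not its unimodality.) The single genuinely non-formal point is the integrability and differentiation-under-the-integral bookkeeping in the middle paragraph, and that is handled entirely by the domination $\E\w(\ZZ-s\uu)\le\E\w(\ZZ)<\infty$; everything else is elementary.
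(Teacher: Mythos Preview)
Your opening paragraph is logically sound but runs against the paper's own architecture: the paper proves Corollary~\ref{cor:der'} first and then obtains Theorem~\ref{th:der} from it by the substitution $\tX=\Si^{-1/2}\X$, $\tuu=\Si^{-1/2}\uu/\|\Si^{-1/2}\uu\|$, $\ttt=t\|\Si^{-1/2}\uu\|$. So within the paper, invoking Theorem~\ref{th:der} here would be circular.

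Your self-contained argument, however, is correct and genuinely different from the paper's. The paper works through the layer-cake decomposition $\w=\int_0^\infty\mathsf I_{A_{\w,c}}\,dc$ and the one-dimensional reduction developed in the proof of Corollary~\ref{cor:A'}, reaching the representation $\E\w(\ZZ-t\uu)=\int_0^\infty dc\int\mu_{A_{\w,c},\uu}(da)\,g_a(t)$ with $g_a(t)=\Phi(a+t)-\Phi(-a+t)$; the key step is then Lemma~\ref{lem:der}, $g'_a(t)\ge -t g_a(t)$, whose proof is a direct calculus computation boiling down to $\tanh(at)\le at$. You instead factor $\E\w(\ZZ-t\uu)=e^{-t^2/2}h(t)$ via the Gaussian shift identity and reduce \eqref{eq:der'} to $h'(t)\ge0$, which you get from the symmetrization $\E[\w(\ZZ)\,S\,e^{-tS}]=-\E[\w(\ZZ)\,S\sinh(tS)]\le0$. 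Your route bypasses the layer-cake machinery and the explicit $\Phi$-calculation, and it makes visible that the inequality itself uses only the evenness and nonnegativity of $\w$; unimodality enters solely through your appeal to Corollary~\ref{cor:w,Z} to secure $h(s)<\infty$ for \emph{all} real $s$, which is what underwrites the differentiation under the expectation (and without which $h$ can indeed blow up). In the paper's approach the analogous bookkeeping is handled by the domination $|g'_a(t)|\le t\,g_a(0)$ coming out of Lemma~\ref{lem:der}.
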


Letting 
$\w=\mathsf I_A$, 
we see that Theorem~\ref{th:der} and Corollary~\ref{cor:der'} 
immediately imply the following two corollaries.

\begin{corollary}\label{cor:Ader}
For 
any real $t\ge0$ 
\begin{equation}\label{eq:Ader}
\frac d{dt}\,\P(\X\in t\uu+A)\ge-t\,\ip\uu{\Si^{-1}\uu}\,\P(\X\in t\uu+A). 	
\end{equation}
\end{corollary}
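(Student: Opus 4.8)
The plan is to obtain Corollary~\ref{cor:Ader} as the special case $\w=\mathsf I_A$ of Theorem~\ref{th:der}, so the whole argument is a matter of checking that this substitution is legitimate. First I would verify that the indicator $\w:=\mathsf I_A$ of the symmetric convex set $A$ meets all the standing assumptions on a weight function: evenness of $\mathsf I_A$ is immediate from the symmetry of $A$; for unimodality, note that for each real $c$ the super-level set $\{\x\in\R^n\colon\mathsf I_A(\x)>c\}$ equals $\R^n$ if $c<0$, equals $A$ if $0\le c<1$, and is empty if $c\ge1$, and each of these is convex; and $\E\mathsf I_A(\ZZ)=\P(\ZZ\in A)>0$ by the standing assumption on $A$, whence also $\E\mathsf I_A(\X)=\P(\X\in A)>0$. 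I would then check the extra finiteness hypothesis of Theorem~\ref{th:der}, namely $\E\w(\X)<\infty$; here this is trivial, since $\E\mathsf I_A(\X)=\P(\X\in A)\le1<\infty$.

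It then remains only to translate the conclusion of Theorem~\ref{th:der} into the present notation. For every real $t\ge0$ and every $\z\in\R^n$ one has $\mathsf I_A(\z-t\uu)=\mathsf I_{t\uu+A}(\z)$, so that
\begin{equation*}
\E\mathsf I_A(\X-t\uu)=\P(\X-t\uu\in A)=\P(\X\in t\uu+A).
\end{equation*}
Substituting this identity (on both sides) into \eqref{eq:der} with $\w=\mathsf I_A$ yields exactly \eqref{eq:Ader}; in particular, the very existence of the derivative on the left-hand side of \eqref{eq:Ader} is part of what Theorem~\ref{th:der} already delivers.

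I do not expect any genuine obstacle: once Theorem~\ref{th:der} is in hand, every step above is a routine verification. The only points worth a word of care are the unimodality of the indicator of a convex set — handled by the super-level-set computation above — and the bookkeeping identity $\E\mathsf I_A(\X-t\uu)=\P(\X\in t\uu+A)$ that identifies the two $t$-derivatives.
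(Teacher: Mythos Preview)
Your proposal is correct and follows exactly the paper's approach: the paper obtains Corollary~\ref{cor:Ader} by simply letting $\w=\mathsf I_A$ in Theorem~\ref{th:der}, and your write-up makes the routine verifications of the hypotheses and the translation of notation explicit.
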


\begin{corollary}\label{cor:Ader'}
For 
any real $t\ge0$
\begin{equation}\label{eq:Ader'}
\frac d{dt}\,\ga_n(t\uu+A)\ge-t\,\ga_n(t\uu+A). 	
\end{equation}
\end{corollary}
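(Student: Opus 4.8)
The plan is to obtain Corollary~\ref{cor:Ader'} as the special case $\w=\mathsf I_A$ of Corollary~\ref{cor:der'}. First I would verify that $\mathsf I_A$ meets all the standing hypotheses imposed on $\w$: for each real $c$ the superlevel set $\{\x\in\R^n\colon\mathsf I_A(\x)>c\}$ is $\R^n$, $A$, or $\emptyset$ according as $c<0$, $0\le c<1$, or $c\ge1$, and all three are convex because $A$ is convex, so $\mathsf I_A$ is unimodal; it is even because $A$ is symmetric; and $\E\,\mathsf I_A(\ZZ)=\ga_n(A)>0$ by the standing assumption on $A$. Moreover $\E\,\mathsf I_A(\ZZ)=\ga_n(A)\le1<\infty$, so the finiteness hypothesis of Corollary~\ref{cor:der'} is automatically satisfied.

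Next I would record the elementary identity
\[
\E\,\mathsf I_A(\ZZ-t\uu)=\P(\ZZ-t\uu\in A)=\P(\ZZ\in t\uu+A)=\ga_n(t\uu+A),
\]
valid for every $t\ge0$. Thus the function $t\mapsto\ga_n(t\uu+A)$ is literally the function $t\mapsto\E\,\mathsf I_A(\ZZ-t\uu)$, so its differentiability at each $t\ge0$ (a right derivative at $t=0$) is inherited from Corollary~\ref{cor:der'}, and inequality \eqref{eq:der'} applied to $\w=\mathsf I_A$ becomes exactly \eqref{eq:Ader'}.

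I do not expect any real obstacle here beyond these bookkeeping checks; the only point deserving a moment's attention is that the derivative in \eqref{eq:Ader'} is well defined, but this is already part of the content of Corollary~\ref{cor:der'} (and of Theorem~\ref{th:der}), whose proof takes care of differentiating under the expectation, equivalently of the smoothness of the Gaussian convolution $t\mapsto\int_{\R^n}\mathsf I_A(\y-t\uu)\,\ga_n(d\y)$. As an alternative route I would note that Corollary~\ref{cor:Ader'} also follows from Corollary~\ref{cor:Ader} by taking $\Si=I_n$ and using $\ip\uu{\Si^{-1}\uu}=\|\uu\|^2=1$; I would mention this but present the derivation via Corollary~\ref{cor:der'} as the cleanest one.
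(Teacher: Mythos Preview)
Your proposal is correct and follows exactly the paper's approach: the paper obtains Corollary~\ref{cor:Ader'} (together with Corollary~\ref{cor:Ader}) simply by letting $\w=\mathsf I_A$ in Corollary~\ref{cor:der'} (respectively Theorem~\ref{th:der}), declaring this immediate. Your write-up merely spells out the routine verifications the paper leaves implicit, and your alternative route via Corollary~\ref{cor:Ader} with $\Si=I_n$ is also valid.
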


\begin{remark} The first inequality in \eqref{eq:} can be easily deduced from 
differential inequality \eqref{eq:der}. Indeed,  
\eqref{eq:der} can be rewritten as 
$(\ln g)'(t)\ge-ct$ for $t\ge0$, where 
\begin{equation*}
	g(t):=\E \w(\ZZ-t\uu)
\end{equation*}
and $c:=\ip\uu{\Si^{-1}\uu}$. Integrating the differential inequality $(\ln g)'(t)\ge-ct$, we have $g(t)\ge e^{-ct^2/2}g(0)$ for $t\ge0$, 
which is indeed the first inequality in \eqref{eq:}. 
Thus, Theorem~\ref{th:der} and Corollaries~\ref{cor:der'},  \ref{cor:Ader} and \ref{cor:Ader'} are indeed refinements of the corresponding 
lower bounds in Theorem~\ref{th:} and 
Corollaries~\ref{cor:w,Z}, \ref{cor:A}, and \ref{cor:A'}. 
\end{remark} 

For general functions $\w$, which are not necessarily even or 
unimodal, we have 

\begin{proposition}\label{prop:} 
Let a Borel-measurable function $\w\colon\R^n\to\R$ be such that for some open interval $T\subseteq\R$ and some nonnegative Borel-measurable function $\w_1\colon\R^n\to\R$ we have $\E \w_1(\ZZ)<\infty$ and $|\ip\uu\ZZ\,\w(\ZZ-t\uu)|\le \w_1(\ZZ)$ for all $t\in T$. Then for all $t\in T$ 
\begin{equation*}
	\frac d{dt}\,\E \w(\ZZ-t\uu)=-\ip\uu{\E\ZZ\,\w(\ZZ-t\uu)}. 
\end{equation*}
\end{proposition}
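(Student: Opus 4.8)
The plan is to prove Proposition~\ref{prop:} by differentiation under the integral sign, justified via the dominated convergence theorem using the hypothesized envelope $\w_1$. Writing out the expectation against the standard Gaussian density, we have
\begin{equation*}
	\E\w(\ZZ-t\uu)=\int_{\R^n}\w(\z-t\uu)\,\vpi_n(\z)\,\dd\z
	=\int_{\R^n}\w(\y)\,\vpi_n(\y+t\uu)\,\dd\y,
\end{equation*}
where $\vpi_n(\z):=(2\pi)^{-n/2}e^{-\|\z\|^2/2}$ is the density of $\ZZ$; the second equality is the substitution $\y=\z-t\uu$. In this second form the $t$-dependence sits only in the smooth Gaussian factor, whose partial derivative in $t$ is
\begin{equation*}
	\frac\partial{\partial t}\,\vpi_n(\y+t\uu)=-\ip\uu{\y+t\uu}\,\vpi_n(\y+t\uu),
\end{equation*}
since $\nabla_\z\vpi_n(\z)=-\z\,\vpi_n(\z)$ and the chain rule contributes the factor $\uu$. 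Changing variables back via $\z=\y+t\uu$ identifies this formal derivative with $-\ip\uu{\E\ZZ\,\w(\ZZ-t\uu)}$, which is the claimed formula.

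The substantive step is to legitimize the interchange of $\frac d{dt}$ and $\int$. I would fix $t_0\in T$, pick a closed subinterval $[t_0-\de,t_0+\de]\subseteq T$, and bound the difference quotient (or, equivalently via the mean value theorem, the $t$-derivative of the integrand) uniformly on that subinterval. In the first representation, the integrand is $\w(\z-t\uu)\,\vpi_n(\z)$ and its $t$-derivative is $-\ip\uu{\nabla\w(\z-t\uu)}\,\vpi_n(\z)$ — but $\w$ need not be differentiable, so it is cleaner to differentiate in the second representation, where the integrand $\w(\y)\,\vpi_n(\y+t\uu)$ has $t$-derivative $-\ip\uu{\y+t\uu}\w(\y)\vpi_n(\y+t\uu)$, and then pull back. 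Pulling back, the quantity to dominate is $|\ip\uu\z\,\w(\z-t\uu)|\,\vpi_n(\z)$ for $t$ ranging over $[t_0-\de,t_0+\de]$, and the hypothesis gives exactly $|\ip\uu\z\,\w(\z-t\uu)|\le\w_1(\z)$ with $\E\w_1(\ZZ)=\int\w_1(\z)\vpi_n(\z)\,\dd\z<\infty$. Thus $\w_1(\z)\vpi_n(\z)$ is an integrable dominating function independent of $t$, and the standard corollary of dominated convergence (differentiation under the integral sign) applies on the interior of $[t_0-\de,t_0+\de]$, in particular at $t_0$. Since $t_0\in T$ was arbitrary, the formula holds throughout $T$.

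The main obstacle — really the only point requiring care — is matching the domination hypothesis to the quantity that actually needs to be dominated after the change of variables. The hypothesis is phrased in terms of $\ip\uu\ZZ\,\w(\ZZ-t\uu)$, i.e.\ with the argument of $\w$ shifted by $-t\uu$ while the linear factor is $\ip\uu\ZZ$ (not $\ip\uu{\ZZ-t\uu}$ or $\ip\uu{\ZZ+t\uu}$); this is precisely what comes out of differentiating the Gaussian density after the substitution, because $\frac\partial{\partial t}\vpi_n(\y+t\uu)$ produces the factor $\ip\uu{\y+t\uu}$ and then $\z=\y+t\uu$ is the original variable $\ZZ$. So the hypothesis is tailored to make the interchange valid, and once the bookkeeping of the substitution is done correctly the rest is a routine invocation of dominated convergence. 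One should also note in passing that $\E\w(\ZZ-t\uu)$ is finite for $t\in T$: from $|\ip\uu\z\,\w(\z-t\uu)|\le\w_1(\z)$ one cannot directly bound $|\w(\z-t\uu)|$, but finiteness of the derivative formula together with absolute integrability of the dominating function suffices for the statement as written, which only asserts the value of the derivative.
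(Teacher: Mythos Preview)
Your proposal is correct and follows essentially the same route as the paper's own proof: substitute $\y=\z-t\uu$ to place the $t$-dependence entirely in the Gaussian density, differentiate under the integral sign using the domination hypothesis (the paper cites \cite[Theorem~(2.27)(b)]{folland} for this step), and change variables back. Your explicit discussion of why the hypothesis on $\ip\uu\ZZ\,\w(\ZZ-t\uu)$ matches precisely the quantity that must be dominated after the pullback is a useful clarification that the paper leaves implicit.
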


Letting 
$\w=\mathsf I_A$, immediately from Proposition~\ref{prop:} we obtain the following. 

\begin{corollary}\label{cor:c mass}
Let $A$ be any Borel subset of $\R^n$ with $\ga_n(A)\ne0$. Then for all real $t$ one has $\P(\ZZ\in t\uu+A)=\ga_n(t\uu+A)>0$ and 
\begin{equation*}
	\frac d{dt}\,\ln\frac1{\ga_n(t\uu+A)}=\frac d{dt}\,\ln\frac1{\P(\ZZ\in t\uu+A)}=\ip\uu{\E(\ZZ|\,\ZZ\in t\uu+A)};  
\end{equation*}
that is, the rate $\dfrac d{dt}\,\ln\dfrac1{\ga_n(t\uu+A)}=-\dfrac{\frac d{dt}\ga_n(t\uu+A)}{\ga_n(t\uu+A)}$ of the relative decrease of\ \,$\ga_n(t\uu+A)$ in $t$ equals the $\uu$-coordinate 
$\ip\uu{\E(\ZZ|\,\ZZ\in t\uu+A)}$ of the center $\E(\ZZ|\,\ZZ\in t\uu+A)$ of the standard Gaussian mass over the set $t\uu+A$. 
\end{corollary}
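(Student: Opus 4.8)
\textbf{Proof proposal for Corollary~\ref{cor:c mass}.}

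The plan is to obtain this as an immediate consequence of Proposition~\ref{prop:}, after first disposing of the measure-theoretic preliminaries. I would begin by noting that, since the standard Gaussian distribution of $\ZZ$ is mutually absolutely continuous with respect to Lebesgue measure on $\R^n$, the hypothesis $\ga_n(A)\ne0$ gives $\P(\ZZ\in A)=\ga_n(A)>0$, and moreover the shifted set $t\uu+A$ also has positive Lebesgue measure, hence $\P(\ZZ\in t\uu+A)=\ga_n(t\uu+A)>0$ for every real $t$. This justifies dividing by $\ga_n(t\uu+A)$ and writing the logarithmic derivative.

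Next I would apply Proposition~\ref{prop:} with $\w=\mathsf I_A$. To check its hypotheses on any bounded open interval $T$: here $\w(\ZZ-t\uu)=\mathsf I_A(\ZZ-t\uu)\le1$, so $|\ip\uu\ZZ\,\w(\ZZ-t\uu)|\le|\ip\uu\ZZ|=:\w_1(\ZZ)$, and $\E\,|\ip\uu\ZZ|<\infty$ since $\ip\uu\ZZ$ is standard normal (as $\uu$ is a unit vector). Thus Proposition~\ref{prop:} applies and yields, for all real $t$,
\begin{equation*}
\frac d{dt}\,\ga_n(t\uu+A)=\frac d{dt}\,\E\,\mathsf I_A(\ZZ-t\uu)=-\ip\uu{\E\,\ZZ\,\mathsf I_A(\ZZ-t\uu)}=-\ip\uu{\E\,\ZZ\,\mathsf I_{t\uu+A}(\ZZ)}.
\end{equation*}
The last quantity is $-\ip\uu{\E(\ZZ\,;\,\ZZ\in t\uu+A)}=-\ga_n(t\uu+A)\,\ip\uu{\E(\ZZ\mid\ZZ\in t\uu+A)}$ by the definition of conditional expectation given the event $\{\ZZ\in t\uu+A\}$ of positive probability. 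Dividing through by $\ga_n(t\uu+A)$ gives
\begin{equation*}
-\frac{\frac d{dt}\,\ga_n(t\uu+A)}{\ga_n(t\uu+A)}=\ip\uu{\E(\ZZ\mid\ZZ\in t\uu+A)},
\end{equation*}
and the left side is exactly $\frac d{dt}\,\ln\frac1{\ga_n(t\uu+A)}$, which is the asserted identity.

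There is really no serious obstacle here; the only point requiring a little care is the interchange of differentiation and integration, but that has already been packaged into Proposition~\ref{prop:}, whose domination hypothesis is trivially met for the bounded integrand $\mathsf I_A$. The remaining work is just the bookkeeping of rewriting $\E\,\ZZ\,\mathsf I_A(\ZZ-t\uu)$ as an integral over the translated set and recognizing it as an unnormalized conditional expectation — together with the observation that, since this holds for every bounded open $T$ and every $t\in T$, it holds for all real $t$.
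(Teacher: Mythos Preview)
Your proposal is correct and is precisely the argument the paper has in mind: the paper states that Corollary~\ref{cor:c mass} follows ``immediately from Proposition~\ref{prop:}'' by taking $\w=\mathsf I_A$, and you have simply written out that deduction in full, including the easy verification of the domination hypothesis and the rewriting of $\E\,\ZZ\,\mathsf I_{t\uu+A}(\ZZ)$ as $\ga_n(t\uu+A)\,\E(\ZZ\mid\ZZ\in t\uu+A)$.
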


Now Corollary~\ref{cor:Ader'} can be restated as follows: 

\begin{corollary}\label{cor:c mass slow}
Suppose that the conditions of Corollary~\ref{cor:c mass} hold and, in addition, the set $A$ is symmetric and convex. Then for all real $t\ge0$ 
\begin{equation*}
	\ip\uu{\E(\ZZ|\,\ZZ\in t\uu+A)}\le t; 
\end{equation*}
that is, when the symmetric convex set $A$ is shifted by the vector $t\uu$, 
the $\uu$-coordinate of the center of the standard Gaussian mass over the set $t\uu+A$ increases by no more than $t$. 
\end{corollary}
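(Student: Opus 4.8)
The plan is to simply combine Corollary~\ref{cor:c mass} with Corollary~\ref{cor:Ader'}. The two statements are really two sides of the same coin: Corollary~\ref{cor:c mass} identifies the relative rate of decrease of $\ga_n(t\uu+A)$ with the $\uu$-coordinate of the conditional mean $\E(\ZZ|\,\ZZ\in t\uu+A)$, while Corollary~\ref{cor:Ader'} bounds that relative rate of decrease by $t$ when $A$ is symmetric and convex. So the proof is essentially a one-line dictionary translation.

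In more detail, under the hypotheses of Corollary~\ref{cor:c mass slow} the set $A$ is Borel with $\ga_n(A)\ne0$, so Corollary~\ref{cor:c mass} applies and gives, for all real $t$,
\begin{equation*}
	\ip\uu{\E(\ZZ|\,\ZZ\in t\uu+A)}=\frac d{dt}\,\ln\frac1{\ga_n(t\uu+A)}=-\,\frac{\frac d{dt}\,\ga_n(t\uu+A)}{\ga_n(t\uu+A)}.
\end{equation*}
On the other hand, $A$ is now also assumed symmetric and convex, so Corollary~\ref{cor:Ader'} yields, for all real $t\ge0$,
\begin{equation*}
	\frac d{dt}\,\ga_n(t\uu+A)\ge-t\,\ga_n(t\uu+A),
\end{equation*}
and dividing through by $\ga_n(t\uu+A)>0$ (positivity again coming from Corollary~\ref{cor:c mass}) gives $-\frac d{dt}\,\ga_n(t\uu+A)\big/\ga_n(t\uu+A)\le t$. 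Chaining this with the displayed identity above produces $\ip\uu{\E(\ZZ|\,\ZZ\in t\uu+A)}\le t$, which is exactly the claimed inequality. The verbal restatement — that shifting a symmetric convex body by $t\uu$ moves the $\uu$-coordinate of the center of Gaussian mass by at most $t$ — then follows by unwinding the meaning of the left-hand side.

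There is essentially no obstacle here: the work has all been done in the earlier corollaries. The only point requiring a moment's care is bookkeeping of the differentiability and positivity that allow the division by $\ga_n(t\uu+A)$ and the passage between the derivative form and the logarithmic-derivative form; but both are already guaranteed by Corollary~\ref{cor:c mass} (positivity and the differentiability of $t\mapsto\ln\ga_n(t\uu+A)$) and by Theorem~\ref{th:der}/Corollary~\ref{cor:Ader'} (the differential inequality). Hence the proof consists solely of assembling these two facts.
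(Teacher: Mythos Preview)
Your proposal is correct and matches the paper's own treatment: the paper presents Corollary~\ref{cor:c mass slow} simply as a restatement of Corollary~\ref{cor:Ader'} via the identity in Corollary~\ref{cor:c mass}, exactly as you have written.
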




\subsection{
Hypothesis testing 
}\label{H} 

Corollary~\ref{cor:A} can be restated in terms of hypothesis testing:

\begin{corollary}\label{cor:H}
Let $\Y$ be a Gaussian random vector in $\R^n$ with an unknown mean $\muu$ and a known nonsingular covariance matrix $\Si$. 
We test the null hypothesis $H_0\colon\muu=\0$ versus the alternative $H_1\colon\muu=\th\uu
$ for real $\th>0$, using the test $\de(\Y):=\ii{\Y\notin A}$ with a symmetric convex set $A\subseteq\R^n$, so that the null hypothesis is rejected if and only if $\Y\notin A$, and the size of the test is $\al:=\P(\X\notin A)$ (where $\X$ is as in Corollary~\ref{cor:A}). Then for the power 
\begin{equation*}
	\be_\de(\th)=\P_\th(\Y\notin A)=\P(\X\notin A-\th\uu)=1-\P(\X\in A-\th\uu)
\end{equation*}
of the test $\de$ at any alternative $\muu=\th\uu
$ for a real $\th>0$ we have 
\begin{equation}\label{eq:H}
	1-e^{-\th^2\ip\uu{\Si^{-1}\uu}/2}(1-\al)  
	\ge\be_\de(\th) \\ 
	\ge 1-r_{\th\|\Si^{-1/2}\uu\|}(a_{\Si,A,\uu})(1-\al)\ge\al;     
\end{equation}
here $\P_\th$ denotes the probability computed assuming that $\muu=\th\uu$ is the true mean of $\Y$. 
\end{corollary}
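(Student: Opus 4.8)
The plan is to derive Corollary~\ref{cor:H} directly from Corollary~\ref{cor:A} by a purely algebraic manipulation, so no new analytic work is needed. First I would record that, under $H_1\colon\muu=\th\uu$, the random vector $\Y$ has the same distribution as $\X+\th\uu$, where $\X$ is the centered Gaussian vector with covariance $\Si$ of Corollary~\ref{cor:A}; hence
\begin{equation*}
\be_\de(\th)=\P_\th(\Y\notin A)=\P(\X+\th\uu\notin A)=\P(\X\notin A-\th\uu)=1-\P(\X\in A-\th\uu).
\end{equation*}
This is exactly the string of equalities displayed in the statement, and it identifies the power with $1$ minus a quantity of the form $\P(\X\in(-\th)\uu+A)$. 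Since Corollary~\ref{cor:A} is stated for $t\ge0$ and here the shift is by $-\th\uu$ with $\th>0$, I would note that replacing $\uu$ by $-\uu$ leaves all the relevant quantities unchanged: $\|\Si^{-1/2}(-\uu)\|=\|\Si^{-1/2}\uu\|$, $\ip{-\uu}{\Si^{-1}(-\uu)}=\ip\uu{\Si^{-1}\uu}$, and, because $A$ is symmetric, $\de^*(\Si^{-1}(-\uu)|A)=\de^*(-\Si^{-1}\uu|A)=\de^*(\Si^{-1}\uu|A)$, so that $a_{\Si,A,-\uu}=a_{\Si,A,\uu}$. Thus applying Corollary~\ref{cor:A} with $\uu$ replaced by $-\uu$ and $t=\th$ gives
\begin{equation*}
e^{-\th^2\ip\uu{\Si^{-1}\uu}/2}\le\frac{\P(\X\in A-\th\uu)}{\P(\X\in A)}\le r_{\th\|\Si^{-1/2}\uu\|}(a_{\Si,A,\uu})\le1.
\end{equation*}

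Next I would substitute $\P(\X\in A)=1-\al$ (by the definition of the size, $\al=\P(\X\notin A)$, and $A$ being a fixed set so $\P(\X\in A)=1-\al$), multiply the chain of inequalities through by the nonnegative factor $1-\al$, and then apply the order-reversing map $x\mapsto 1-x$. The bottom bound $r_{\th\|\Si^{-1/2}\uu\|}(a_{\Si,A,\uu})\le1$ becomes, after multiplication by $1-\al$ and subtraction from $1$, the inequality $1-r_{\th\|\Si^{-1/2}\uu\|}(a_{\Si,A,\uu})(1-\al)\ge 1-(1-\al)=\al$, which is the last inequality in \eqref{eq:H}; the other two inequalities translate termwise into the first two inequalities of \eqref{eq:H}. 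Recalling that $\P(\X\in A-\th\uu)=1-\be_\de(\th)$ turns the middle quantity $\P(\X\in A-\th\uu)/\P(\X\in A)$, after the same operations, into $\be_\de(\th)$, and the proof is complete.

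There is essentially no obstacle here: the only points requiring a moment's care are (a) the sign flip $\uu\rightsquigarrow-\uu$ needed to bring the shift into the range $t\ge0$ covered by Corollary~\ref{cor:A}, together with the observation that symmetry of $A$ makes $a_{\Si,A,\uu}$ insensitive to this flip, and (b) keeping track of the direction of the inequalities when one multiplies by $1-\al\ge0$ and then reflects through $x\mapsto 1-x$. Both are routine, so the whole argument is a few lines.
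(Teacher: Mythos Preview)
Your proposal is correct and matches the paper's approach: the paper presents Corollary~\ref{cor:H} simply as a restatement of Corollary~\ref{cor:A} in testing language, without a separate proof, and your argument spells out exactly this translation. The only extra care you take---the sign flip $\uu\rightsquigarrow-\uu$ and the invariance of $a_{\Si,A,\uu}$ under it---is a minor bookkeeping point that the paper leaves implicit (equivalently, one can use the symmetry of $A$ and of the law of $\X$ to write $\P(\X\in A-\th\uu)=\P(\X\in\th\uu+A)$ directly).
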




 
\section{Proofs} 
\label{proofs}

Of the first four results stated in Section~\ref{summary} -- Theorem~\ref{th:} and 
Corollaries~\ref{cor:w,Z}, \ref{cor:A}, and \ref{cor:A'} -- Corollary~\ref{cor:A'} is formally the least general. However, we shall prove Corollary~\ref{cor:A'} first. From there, it will not be hard to deduce the more general Corollary~\ref{cor:w,Z} and then in turn Theorem~\ref{th:}, which latter immediately yields Corollary~\ref{cor:A} as a special case. 
Then a proof of Proposition~\ref{prop:exactA} will be given.

After that, 
we will prove Corollary~\ref{cor:der'} and Theorem~\ref{th:der}, 
in this order. Corollaries~\ref{cor:Ader} and \ref{cor:Ader'} will then follow immediately. 

A proof of Proposition~\ref{prop:} will conclude this section. 

\bigskip 
 
%
%

To prove 
Corollary~\ref{cor:A'}, we shall need 

\begin{lemma}\label{lem:}
The expression $r_t(a)$, defined in \eqref{eq:r}, is continuous and nondecreasing in $a\in[0,\infty]$, for each $t\in[0,\infty)$.   
\end{lemma}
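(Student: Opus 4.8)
The plan is to handle continuity and monotonicity of $r_t(a)$ in $a$ separately, and to reduce the monotonicity claim to Theorem~B. First I would dispose of continuity: on the open interval $a\in(0,\infty)$ the function $a\mapsto\bigl(\Phi(t+a)-\Phi(t-a)\bigr)/\bigl(\Phi(a)-\Phi(-a)\bigr)$ is a ratio of two differentiable functions whose denominator $\Phi(a)-\Phi(-a)=2\Phi(a)-1$ is strictly positive there, so it is continuous. The only real work is at the two endpoints $a=0$ and $a=\infty$. As $a\to\infty$, $\Phi(t+a)\to1$, $\Phi(t-a)\to0$, $\Phi(a)\to1$, $\Phi(-a)\to0$, so the ratio tends to $1=r_t(\infty)$. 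As $a\to0+$, both numerator and denominator tend to $0$, and an application of l'Hospital's rule (or a first-order Taylor expansion of $\Phi$ at $t$ and at $0$) gives the limit $\bigl(\vpi(t+0)+\vpi(t-0)\bigr)/\bigl(\vpi(0)+\vpi(0)\bigr)=2\vpi(t)/(2\vpi(0))=\vpi(t)/\vpi(0)=e^{-t^2/2}=r_t(0)$, matching the value assigned at $a=0$.

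For monotonicity in $a$ it suffices, by continuity, to prove that $r_t(\cdot)$ is nondecreasing on $(0,\infty)$; the endpoints then follow. Fix $t\ge0$ and set
\begin{equation*}
	f(a):=\Phi(t+a)-\Phi(t-a),\qquad g(a):=\Phi(a)-\Phi(-a)=2\Phi(a)-1,
\end{equation*}
so that $r_t(a)=f(a)/g(a)$ on $(0,\infty)$. Both $f$ and $g$ vanish at $a=0+$, $g>0$ and $g'(a)=2\vpi(a)>0$ on $(0,\infty)$, and $g'$ does not change sign. Thus the hypotheses of Theorem~B are in place with $(a,b)=(0,\infty)$, provided I can show the derivative ratio
\begin{equation*}
	\frac{f'(a)}{g'(a)}=\frac{\vpi(t+a)+\vpi(t-a)}{2\vpi(a)}
\end{equation*}
is increasing in $a$ on $(0,\infty)$. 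Writing $\vpi(x)=\frac1{\sqrt{2\pi}}e^{-x^2/2}$, the factor $\frac1{\sqrt{2\pi}}$ cancels and, after multiplying numerator and denominator by $e^{a^2/2}$, the ratio becomes
\begin{equation*}
	\tfrac12\Bigl(e^{-(t+a)^2/2+a^2/2}+e^{-(t-a)^2/2+a^2/2}\Bigr)
	=\tfrac12 e^{-t^2/2}\bigl(e^{-ta}+e^{ta}\bigr)=e^{-t^2/2}\cosh(ta),
\end{equation*}
which is manifestly nondecreasing in $a\ge0$ (strictly increasing when $t>0$, constant when $t=0$). Hence $f'/g'$ is nondecreasing on $(0,\infty)$, and Theorem~B yields that $r_t=f/g$ is nondecreasing there as well.

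The main obstacle, such as it is, is purely bookkeeping: Theorem~B as stated requires $f'/g'$ to be \emph{increasing}, whereas for $t=0$ we only get that it is constant (and indeed $r_0\equiv1$). I would handle this by treating $t=0$ as a trivial separate case — there $f(a)=2\Phi(a)-1=g(a)$, so $r_0(a)=1$ for all $a\in(0,\infty)$, and by the limits computed above $r_0(0)=e^0=1$ and $r_0(\infty)=1$, so $r_0\equiv1$ is (weakly) nondecreasing — and apply Theorem~B only for $t>0$, where $\cosh(ta)$ is strictly increasing in $a$. Combining the two cases with the endpoint limits established above gives continuity and monotonicity of $r_t(\cdot)$ on all of $[0,\infty]$, completing the proof.
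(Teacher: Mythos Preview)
Your proof is correct and follows essentially the same route as the paper: treat $t=0$ as trivial, check continuity at the endpoints directly (l'Hospital at $a=0$, limits at $a=\infty$), and for $t>0$ apply Theorem~B after computing the derivative ratio $\bigl(\vpi(t+a)+\vpi(t-a)\bigr)/(2\vpi(a))=e^{-t^2/2}\cosh(ta)$. The only cosmetic difference is your explicit verification that the $t=0$ case must be separated because Theorem~B demands strict increase of $f'/g'$; the paper simply declares $t=0$ trivial at the outset.
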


\begin{proof}
The case $t=0$ is trivial. Fix now any $t\in(0,\infty)$.  
That $r_t(a)$ is continuous in $a$ at $a=\infty$ is obvious. It is also obvious that $r_t(a)$ is continuous in $a$ at each point $a\in(0,\infty)$. That $r_t(a)$ is continuous in $a$ at $a=0$ follows by the l'Hospital rule. 

It remains to show that $r_t(a)$ is increasing in $a\in(0,\infty)$. For such $a$, we have 
\begin{equation*}
	r_t(a)=\frac{\psi_t(a)}{\psi_0(a)}, 
\end{equation*}
where $\psi_t(a):=\Phi(t+a)-\Phi(t-a)$. 
Note that $\psi_t(0+)=\psi_0(0+)=0$ and the ``derivative ratio''
\begin{equation*}
	\frac{\psi'_t(a)}{\psi'_0(a)}=\frac{\vpi(t+a)+\vpi(t-a)}{2\vpi(a)}
	=e^{-t^2/2}\cosh ta
\end{equation*}
is increasing in $a\in(0,\infty)$.  
So, by Theorem~B (stated in Section~\ref{intro}), 
$r_t(a)$ is increasing in $a\in(0,\infty)$. The proof of Lemma~\ref{lem:} is complete. 
\end{proof}

\begin{proof}[Proof of 
Corollary~\ref{cor:A'}]
In view of \eqref{eq:a_A} and because the 
set $A$ is symmetric,  
\begin{equation}\label{eq:a=}
	a_{A,\uu}=\de^*(\uu|A)=\sup\{\ip\z\uu\colon\z\in A\} 
	=\sup\{|\ip\z\uu|\colon\z\in A\}.   
\end{equation}
By the spherical symmetry of the standard Gaussian measure $\ga_n$, without loss of generality $\uu$ equals $\e_1$, the first vector of the standard basis of $\R^n$. So, denoting by $\vpi_k$ the density of the standard Gaussian measure $\ga_k$ over $\R^k$ (with respect to the Lebesgue measure over $\R^k$), 
we have 
\begin{align}
	g_{A,\uu}(t):=
	\ga_n(t\uu+A)&=\int_{
	t\uu+A}d\z\,\vpi_n(\z)
	\label{eq:=g=} 
	\\ 
&	=\int_
A d\x\,\vpi_n(\x+t\uu)
\label{eq:phi(x+tu)} \\ 
&	=\int_{\R}dx\,\vpi(x+t)h_{A,\uu}(x) \notag \\ 
&	=\int_{-a_{A,\uu}}^{a_{A,\uu}}dx\,\vpi(x+t)h_{A,\uu}(x), \label{eq:g=}
\end{align}
where 
\begin{equation*}
	h_{A,\uu}(x):=\int_{\R^{n-1}}d\y\,\vpi_{n-1}(\y)
	\,\mathsf I_A(x\uu+\y) 
\end{equation*}
and the orthogonal complement $\{
\y\in\R^n\colon\ip
\y\uu=0\}$ of the vector $\uu=\e_1$ is identified with $\R^{n-1}$;  
equality \eqref{eq:g=} holds because, if $|x|>a_{A,\uu}$, then for all $\y\in\R^{n-1}$ we have $|\ip{x\uu+\y}\uu|=|x|>a_{A,\uu}$; so, by \eqref{eq:a=}, 
$x\uu+\y\notin A$ for all $\y\in\R^{n-1}$, whence $h_{A,\uu}(x)=0$. 
 
The functions 
$\R\times\R^{n-1}\ni(x,\y)\mapsto 
\mathsf I_A(x\uu+\y)\in[0,\infty)$ and $\vpi_{n-1}$ are even and log concave, and hence so is the function 
$$\R\times\R^{n-1}\ni(x,\y)\mapsto\vpi_{n-1}(\y)
\mathsf I_A(x\uu+\y)\in[0,\infty).$$ 
Therefore, 
in view of Theorem~A, 
the function $h_{A,\uu}\colon\R\to[0,\infty)$ is also even and log concave, and hence unimodal; it also follows that $h_{A,\uu}$ is continuous 
on the interval $(-a_{A,\uu},a_{A,\uu})$. 
So, there is a (unique, nonnegative, finite) Borel measure $\mu_{A,\uu}$ over the interval $(0,a_{A,\uu}]$ such that $\mu_{A,\uu}\big((x,a_{A,\uu}]\big)=h_{A,\uu}(x)$ for all $x\in[0,a_{A,\uu})$, and then for $x\in(-a_{A,\uu},a_{A,\uu})$ we have 
\begin{align*}
	h_{A,\uu}(x)&=h_{A,\uu}(|x|)=\mu_{A,\uu}\big((|x|,a_{A,\uu}]\big)=\int_{(0,a_{A,\uu}]} \mu_{A,\uu}(da)\,\mathsf I\{a>|x|\} \\ 
	&=\int_{(0,a_{A,\uu}]} \mu_{A,\uu}(da)\,h_a(x),  
\end{align*}
where 
\begin{equation*}
	h_a(x):=\ii{|x|<a}. 
\end{equation*}  
So, by \eqref{eq:g=} and the Fubini theorem, 
\begin{equation}\label{eq:g(t)=}
	g_{A,\uu}(t)=\int_{(0,a_{A,\uu}]} \mu_{A,\uu}(da)\,\int_{-a_{A,\uu}}^{a_{A,\uu}} dx\,\vpi(x+t)h_a(x)
	=\int_{(0,a_{A,\uu}]} \mu_{A,\uu}(da)\,g_a(t), 
\end{equation}
where, for $a\in(0,a_{A,\uu}]$,  
\begin{equation}\label{eq:g_a}
	g_a(t):=
	\int_{-a}^a dx\,\vpi(x+t)
	=r_t(a)g_a(0), 
\end{equation}
in view of \eqref{eq:r}. 
So, by Lemma~\ref{lem:}, 
\begin{equation*}
	r_t(0)g_a(0)\le g_a(t)\le r_t(a_{A,\uu})g_a(0)
\end{equation*}
for $a\in(0,a_{A,\uu}]$, whence, by \eqref{eq:g(t)=}, 
\begin{equation}\label{eq:rg<g<rg}
	r_t(0)g_{A,\uu}(0)\le g_{A,\uu}(t)\le r_t(a_{A,\uu})g_{A,\uu}(0). 
\end{equation}
In view of \eqref{eq:=g=}, 
inequalities \eqref{eq:rg<g<rg} are the same as the first two inequalities in \eqref{eq:%
A'}. The equality and the third inequality in \eqref{eq:%
A'} follow by \eqref{eq:r} and Lemma~\ref{lem:}. 
Corollary~\ref{cor:A'} is now proved. 
\end{proof}

The following remark will be used in the proof of Corollary~\ref{cor:w,Z}. 

\begin{remark}\label{rem:mix}
We have
\begin{equation}\label{eq:mix}
	\w(\x)=\int_0^\infty \w_c(\x)\,dc
\end{equation}
for all $\x\in\R^n$, where 
\begin{equation}\label{eq:A_w,c}
	\w_c:=\mathsf{I}_{A_{\w,c}},\quad A_{\w,c}:=\{\x\in\R^n\colon\w(\x)
	>c\}.  
\end{equation}
Note also that the functions $\w_c$ are log concave. 
Thus, the unimodal function $\w$ is a mixture of log concave functions $\w_c$; moreover, the functions $\w_c$ are even whenever the function $\w$ is even. 
\end{remark}

\begin{proof}[Proof of Corollary~\ref{cor:w,Z}]
For any $c\ge0$, by \eqref{eq:A_w,c} and \eqref{eq:A_f}, $A_{\w,c}\subseteq A_{\w,0}=A_\w$, whence, by \eqref{eq:a_A} and \eqref{eq:a_w,v}, 
$a_{A_{\w,c},\uu}\le a_{A_\w,\uu}=a_{\w,\uu}$. Now Lemma~\ref{lem:} yields 
$r_{t}(a_{A_{\w,c},\uu})\le r_{t}(a_{\w,\uu})$. 
Hence, by 
Remark~\ref{rem:mix} and the second inequality in \eqref{eq:A'}, 
\begin{align}
	\E \w(\ZZ-t\uu)&=\int_0^\infty \E \w_c(\ZZ-t\uu)\,dc \notag \\ 
	&=\int_0^\infty \P(\ZZ\in t\uu+A_{\w,c})\,dc \notag \\ 
	&=\int_0^\infty \ga_n(t\uu+A_{\w,c})\,dc \label{eq:=int dc} \\ 
	&\le\int_0^\infty r_{t}(a_{A_{\w,c},\uu}) \ga_n(A_{\w,c})\,dc \notag \\ 
	&\le r_{t}(a_{\w,\uu})\int_0^\infty\ga_n(A_{\w,c})\,dc
	=r_{t}(a_{\w,\uu})\E \w(\ZZ), \notag 
\end{align}
which proves the second inequality in \eqref{eq:'}. The proofs of the other inequalities in \eqref{eq:'} and of the equality there are similar and even somewhat simpler. 
\end{proof}

\begin{proof}[Proof of Theorem~\ref{th:}]
Given $\X$, $\Si$, $\w$, $\uu$, and $t$ as in the statement of Theorem~\ref{th:}, define $\tX$, $\tf$, $\tuu$, and $\ttt$ as follows: $\tX:=\Si^{-1/2}\X$, $\tf(\tx):=\w(\Si^{1/2}\tx)$ for $\tx\in\R^n$, $\tuu:=\Si^{-1/2}\uu/\|\Si^{-1/2}\uu\|$, and $\ttt:=t\|\Si^{-1/2}\uu\|$. 
Applying now Corollary~\ref{cor:w,Z} with, respectively, $\tX$, $\tf$, $\tuu$, and $\ttt$ in place of $\ZZ$, $\w$, $\uu$, and $t$ there, we obtain Theorem~\ref{th:}. 
\end{proof}


\begin{proof}[Proof of Proposition~\ref{prop:exactA}]
As in the 
proof of Theorems~\ref{th:}, the consideration can be easily reduced to the case $\Si=I_n$, so that $\X=\ZZ$, a standard Gaussian random vector. Take then indeed any 
$a\in[0,\infty]$ and let 
\begin{equation*}
	A:=\{z\in\R^n\colon|\ip\z\uu|\le a\}. 
\end{equation*} 
Then 
\begin{equation*}
	a_{\Si,A,\uu}=a_{I_n,A,\uu}=\de^*(\uu|A)=a
\end{equation*}
by \eqref{eq:de*}. 
Also, for $a\in[0,\infty)$, 
\begin{equation*}
\begin{aligned}
	\P(\ZZ\in t\uu+A)&=\P(|\ip{\ZZ-t\uu}\uu|\le a) \\ 
	&=\P(t-a\le \ip\ZZ\uu\le t+a) \\ 
	&=\Phi(t+a)-\Phi(t-a) 
	=r_{t}(a)\P(\ZZ\in A)
\end{aligned}	
\end{equation*}
by \eqref{eq:r}, so that the second equality in \eqref{eq:A} (with $I_n$ and $\ZZ$ in place of $\Si$ and $\X$) turns into the equality; the case $a=\infty$ is even simpler than this. This proves part~(ii) of Proposition~\ref{prop:exactA}. 

Part (i) of it now follows because 
\begin{equation*}
	\frac{\P(\ZZ\in t\uu+A)}{\P(\ZZ\in A)}
	=r_{t}(a)\underset{a\downarrow0}\longrightarrow r_{t}(0)=e^{-t^2/2},
\end{equation*}
by Lemma~\ref{lem:}.  
\end{proof}

To prove Corollary~\ref{cor:der'}, we shall need 

\begin{lemma}\label{lem:der}
Recall the definition of $g_a(t)$ in \eqref{eq:g_a}. We have 
\begin{equation}\label{eq:lem:der}
	0\ge g'_a(t)\ge-tg_a(t)
\end{equation}
for all $a\in[0,\infty]$ and $t\in[0,\infty)$.   
\end{lemma}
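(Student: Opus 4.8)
The plan is to prove the sharper claim $0\ge g_a'(t)\ge -tg_a(t)$ for $t\ge0$ by explicit computation, handling the boundary cases $a=0$ and $a=\infty$ separately and the generic case $a\in(0,\infty)$ directly from the integral representation \eqref{eq:g_a}. For $a=\infty$ we have $g_\infty(t)=\int_\R\vpi(x+t)\,dx=1$, so $g_\infty'(t)=0$ and both inequalities hold trivially. For $a=0$ we have $g_0(t)=e^{-t^2/2}$ (via the convention in \eqref{eq:r}, i.e.\ $g_0(t)=r_t(0)g_0(0)$ with $g_0(0)$ interpreted as the appropriate normalization), so $g_0'(t)=-t e^{-t^2/2}=-tg_0(t)$ and again both inequalities hold, with the right one an equality. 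So the substance is the case $a\in(0,\infty)$.

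For $a\in(0,\infty)$, I would write $g_a(t)=\int_{-a}^a\vpi(x+t)\,dx=\Phi(t+a)-\Phi(t-a)$ and differentiate under the integral sign (justified since $\vpi$ and its $t$-derivative are bounded and continuous), obtaining $g_a'(t)=\vpi(t+a)-\vpi(t-a)$. The left inequality $g_a'(t)\le 0$ for $t\ge0$ is then immediate: $|t+a|\ge|t-a|$ when $t,a\ge0$, and $\vpi$ is decreasing in $|\cdot|$, so $\vpi(t+a)\le\vpi(t-a)$. For the right inequality I want $\vpi(t+a)-\vpi(t-a)\ge -t\bigl(\Phi(t+a)-\Phi(t-a)\bigr)$, equivalently $\int_{-a}^a\bigl(\vpi'(x+t)+t\,\vpi(x+t)\bigr)\,dx\ge0$. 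Using $\vpi'(s)=-s\vpi(s)$, the integrand equals $(t-(x+t))\vpi(x+t)=-x\,\vpi(x+t)$, so the claim reduces to $\int_{-a}^a(-x)\vpi(x+t)\,dx\ge0$, i.e.\ $\int_{-a}^a x\,\vpi(x+t)\,dx\le0$.

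The last inequality is the genuine (though still elementary) point: $\int_{-a}^a x\,\vpi(x+t)\,dx\le 0$ for $t\ge0$. I would prove it by substituting $x\mapsto -x$ on the negative half and pairing: $\int_{-a}^a x\,\vpi(x+t)\,dx=\int_0^a x\bigl(\vpi(x+t)-\vpi(-x+t)\bigr)\,dx=\int_0^a x\bigl(\vpi(x+t)-\vpi(x-t)\bigr)\,dx$, using evenness of $\vpi$. For $x,t\ge0$ we have $|x+t|\ge|x-t|$, hence $\vpi(x+t)\le\vpi(x-t)$, so each integrand is $\le0$ and the integral is $\le0$. This completes the generic case.

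The main obstacle is essentially bookkeeping rather than depth: making sure the boundary conventions in \eqref{eq:r} and \eqref{eq:g_a} are applied consistently so that $g_0$ and $g_\infty$ are the correct pointwise limits of $g_a$, and justifying differentiation under the integral sign uniformly in $t$ on compact subsets of $[0,\infty)$ — both routine. I would also note, as a sanity check, that at $t=0$ both sides of the right inequality vanish, and that the derivative identity $g_a'(t)=\vpi(t+a)-\vpi(t-a)$ together with the reduction above makes the right inequality equivalent to the monotone-rearrangement fact just proved, so no additional l'Hospital-type machinery is needed here.
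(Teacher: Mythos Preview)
Your argument for the generic case $a\in(0,\infty)$ is correct and takes a genuinely different route from the paper. The paper rewrites the second inequality (for $t>0$) as $\la_a(t):=g_a(t)+g_a'(t)/t\ge0$, computes explicitly
\[
\la_a'(t)=(\tanh at - at)\,\frac{2e^{at}}{t^2}\,\vpi(a+t)\cosh at\le0,
\]
and concludes that $\la_a$ decreases on $(0,\infty)$ to $\la_a(\infty-)=0$. Your route is more direct: you recognize that $g_a'(t)+t\,g_a(t)=\int_{-a}^a\bigl(\vpi'(x+t)+t\,\vpi(x+t)\bigr)\,dx=-\int_{-a}^a x\,\vpi(x+t)\,dx$ and then establish the sign by the elementary symmetrization $\int_{-a}^a x\,\vpi(x+t)\,dx=\int_0^a x\bigl(\vpi(x+t)-\vpi(x-t)\bigr)\,dx\le0$. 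Your approach avoids the hyperbolic identities and the somewhat opaque derivative computation; the paper's approach, in exchange, produces an explicit formula for $\la_a'$ that could be useful for sharper quantitative estimates.

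One minor bookkeeping correction: under the paper's definition \eqref{eq:g_a}, $g_0(t)=\int_{-0}^0\vpi(x+t)\,dx=0$ identically, not $e^{-t^2/2}$. The value $r_t(0)=e^{-t^2/2}$ in \eqref{eq:r} is the limiting \emph{ratio} $g_a(t)/g_a(0)$ as $a\downarrow0$, not $g_0$ itself (and indeed $g_0(t)=r_t(0)\,g_0(0)=e^{-t^2/2}\cdot0=0$). This does not affect the validity of your proof, since for $g_0\equiv0$ the inequalities read $0\ge0\ge0$; your normalization remark is simply unnecessary.
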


\begin{proof}
Fix any $a\in[0,\infty]$. If $a=\infty$, then $g_a(t)=1$ for all $t$ and hence \eqref{eq:lem:der} is obvious. So, without loss of generality $a\in[0,\infty)$, and then 
\begin{equation}\label{eq:g_a,g'_a}
	g_a(t)=\Phi(a+t)-\Phi(-a+t)\quad\text{and}\quad 
	g'_a(t)=\vpi(a+t)-\vpi(-a+t)
\end{equation}
for all $t$, which yields the first inequality in \eqref{eq:lem:der}. 

Also, for $t=0$ \eqref{eq:lem:der} is trivial. Hence, without loss of generality $t\in(0,\infty)$, and then the second inequality in 
\eqref{eq:lem:der} can be rewritten as $\la_a(t)\ge0$, where 
\begin{equation*}
	\la_a(t):=g_a(t)+g'_a(t)/t=\Phi(a+t)-\Phi(-a+t)+\frac{\vpi(a+t)-\vpi(-a+t)}t. \label{eq:psi}
\end{equation*}
But
\begin{equation*}
	\la'_a(t)=(\tanh at - a t) \,\frac{2e^{at}}{t^2}\, \vpi(a + t)  \cosh at\le0,
\end{equation*}
since $\tanh 0=0$ and $\tanh'=1/\cosh^2\le1$. So, $\la_a(t)$ is decreasing in $t>0$, to $\la_a(\infty-)=0$. So, for $t>0$ we do have $\la_a(t)\ge0$, which completes the proof of the second inequality in \eqref{eq:lem:der}. 
%
Lemma~\ref{lem:der} is now proved.   
\end{proof}


\begin{proof}[Proof of Corollary~\ref{cor:der'}]
By \eqref{eq:=int dc}, \eqref{eq:=g=}, and \eqref{eq:g(t)=}, 
\begin{equation}\label{eq:=intint}
\begin{aligned}
	\E \w(\ZZ-t\uu)=\int_0^\infty dc\;\ga_n(t\uu+A_{\w,c})  
 &=\int_0^\infty dc\; g_{A_{\w,c},\uu}(t) \\ 
&=\int_0^\infty dc\; \int_{(0,a_{A_{\w,c},\uu}]} \mu_{A_{\w,c},\uu}(da)\,g_a(t).  
\end{aligned}	
\end{equation}
By Lemma~\ref{lem:der}, for all $a\in[0,\infty]$ and $t\in[0,\infty)$ we have 
\begin{equation}\label{eq:lem:derr}
	0\ge g'_a(t)\ge-tg_a(t)\ge-tg_a(0),  
\end{equation}
whence 
\begin{align*}
	\int_0^\infty dc\; \int_{(0,a_{A_{\w,c},\uu}]} \mu_{A_{\w,c},\uu}(da)\,|g'_a(t)|
	&\le t\int_0^\infty dc\; \int_{(0,a_{A_{\w,c},\uu}]} \mu_{A_{\w,c},\uu}(da)\,g_a(0) \\ 
	&=t\E \w(\ZZ)<\infty. 
\end{align*}
So, using \eqref{eq:=intint} and \eqref{eq:lem:derr} again, together with the standard rule of the differentiation of an integral with respect to a parameter -- see e.g.\ \cite[Theorem~(2.27)(b)]{folland}, we see that for all real $t\ge0$
\begin{equation*}
\begin{aligned}
	\frac d{dt}\,\E \w(\ZZ-t\uu)&=\int_0^\infty dc\; \int_{(0,a_{A_{\w,c},\uu}]} \mu_{A_{\w,c},\uu}(da)\,g'_a(t) \\ 
	&\ge-t\int_0^\infty dc\; \int_{(0,a_{A_{\w,c},\uu}]} \mu_{A_{\w,c},\uu}(da)\,g_a(t)
	=-t\E \w(\ZZ-t\uu),   
\end{aligned}	
\end{equation*}
which completes the proof of Corollary~\ref{cor:der'}. 
%
\end{proof}

\begin{proof}[Proof of Theorem~\ref{th:der}]
Given $\X$, $\Si$, $\w$, $\uu$, and $t$ as in the statement of Theorem~\ref{th:der}, define $\tX$, $\tf$, $\tuu$, and $\ttt$ as follows: $\tX:=\Si^{-1/2}\X$, $\tf(\tx):=\w(\Si^{1/2}\tx)$ for $\tx\in\R^n$, $\tuu:=\Si^{-1/2}\uu/\|\Si^{-1/2}\uu\|$, and $\ttt:=t\|\Si^{-1/2}\uu\|$. 
Applying now Corollary~\ref{cor:der'} with, respectively, $\tX$, $\tf$, $\tuu$, and $\ttt$ in place of $\ZZ$, $\w$, $\uu$, and $t$ there, we obtain Theorem~\ref{th:der}. 
\end{proof}

\begin{proof}[Proof of Proposition~\ref{prop:}]
Using 
the domination condition involving the function $\w_1$ and (again) the rule of the differentiation of an integral with respect to a parameter, for all $t\in T$ we have
\begin{align*}
	\frac d{dt}\,\E \w(\ZZ-t\uu)	
	&=\frac d{dt}\,\int_{\R^n}d\z\;\vpi_n(\z)\w(\z-t\uu) \\ 
	&=\frac d{dt}\,\int_{\R^n}d\x\;\vpi_n(\x+t\uu)\w(\x) \\ 
	&=\int_{\R^n}d\x\;\frac d{dt}\,\vpi_n(\x+t\uu)\w(\x) \\ 
	&=-\int_{\R^n}d\x\,\vpi_n(\x+t\uu)\,\ip{\x+t\uu}\uu\,\w(\x) \\ 
	&=-\int_{\R^n}d\z\,\vpi_n(\z)\,\ip\uu\z\,\w(\z-t\uu) \\
	&=-\ip\uu{\E\ZZ\,\w(\ZZ-t\uu)}, 
\end{align*}
as claimed. 
\end{proof}


\def\cprime{$'$} \def\polhk#1{\setbox0=\hbox{#1}{\ooalign{\hidewidth
  \lower1.5ex\hbox{`}\hidewidth\crcr\unhbox0}}}
  \def\polhk#1{\setbox0=\hbox{#1}{\ooalign{\hidewidth
  \lower1.5ex\hbox{`}\hidewidth\crcr\unhbox0}}}
  \def\polhk#1{\setbox0=\hbox{#1}{\ooalign{\hidewidth
  \lower1.5ex\hbox{`}\hidewidth\crcr\unhbox0}}} \def\cprime{$'$}
  \def\polhk#1{\setbox0=\hbox{#1}{\ooalign{\hidewidth
  \lower1.5ex\hbox{`}\hidewidth\crcr\unhbox0}}} \def\cprime{$'$}
  \def\polhk#1{\setbox0=\hbox{#1}{\ooalign{\hidewidth
  \lower1.5ex\hbox{`}\hidewidth\crcr\unhbox0}}} \def\cprime{$'$}
  \def\cprime{$'$}
\providecommand{\bysame}{\leavevmode\hbox to3em{\hrulefill}\thinspace}
\providecommand{\MR}{\relax\ifhmode\unskip\space\fi MR }
\providecommand{\MRhref}[2]{%
  \href{http://www.ams.org/mathscinet-getitem?mr=#1}{#2}
}
\providecommand{\href}[2]{#2}

\end{document}